\theoremstyle{plain}
\newtheorem{theorem}{Theorem}[section]
\newtheorem{lemma}[theorem]{Lemma}
\newtheorem*{theorem*}{Theorem}
\newtheorem{corollary}[theorem]{Corollary}
\newtheorem{proposition}[theorem]{Proposition}
\theoremstyle{definition}
\theoremstyle{remark}
\newtheorem*{remark}{Remark}
\newtheorem*{example}{Example}
 \DeclareMathOperator{\Ext}{Ext}
\DeclareMathOperator{\Coker}{Coker} 
\DeclareMathOperator{\tr}{tr} \DeclareMathOperator{\Stab}{Stab}
\DeclareMathOperator{\Res}{Res} \DeclareMathOperator{\Ind}{Ind}
\DeclareMathOperator{\Spec}{Spec} \DeclareMathOperator{\Hom}{Hom}
\DeclareMathOperator{\End}{End} 
 \DeclareMathOperator{\rad}{rad}
\DeclareMathOperator{\Ima}{Im} \DeclareMathOperator{\res}{res}
 \DeclareMathOperator{\Id}{Id}
\DeclareMathOperator{\reg}{reg}
\DeclareMathOperator{\hreg}{hreg}
\DeclareMathOperator{\Gl}{Gl}
\DeclareMathOperator{\Syl}{Syl}
\DeclareMathOperator{\pSing}{p-Sing}
\DeclareMathOperator{\op}{op}
\DeclareMathOperator{\depth}{depth}
\DeclareMathOperator{\colim}{colim}
\begin{document}

\title{The Module structure of a Group Action on a Ring}

\author{Peter Symonds}
\thanks{This work was supported by the Engineering and Physical Research Council (Grant number EP/V036017/1).}
\address{Department of Mathematics\\
         University of Manchester\\
     Manchester M13 9PL\\
     United Kingdom}
\email{Peter.Symonds@manchester.ac.uk}

\subjclass[2020]{Primary: 13A50; Secondary: 14L24, 20C20}
\date{26/03/2024}

\begin{abstract}
Consider a finite group $G$ acting on a graded Noetherian $k$-algebra $S$, for some field $k$ of characteristic $p$; for example $S$ might be a polynomial ring. Regard $S$ as a $kG$-module and consider the multiplicity of a particular indecomposable module as a summand in each degree. We show how this can be described in terms of homological algebra and how it is linked to the geometry of the group action on the spectrum of $S$.
\end{abstract}

\maketitle

\section{Introduction}
\label{sec:intro}

Given a field $k$ of prime characteristic $p$, which in this introduction we assume to be algebraically closed, we consider a Noetherian graded $k$-algebra $S= \oplus_i S_i$ and the action of a finite group $G$ on it by graded algebra automorphisms. The main example is a polynomial ring $k[V]$, where $V$ is a finite dimensional $kG$-module; in representation-theoretic terms this is the symmetric algebra on the dual of $V$.

We are particularly interested in the decomposition of $S$ into indecomposable $kG$-summands. We already know that in the case of $k[V]$ only a finite number of isomorphism types of indecomposable summands can appear, although this is not true in general \cite[4.2]{Sy-Cech}. We show that, for any finite dimensional indecomposable $kG$-module $M$, the summands in such a decomposition that are isomorphic to $M$ can be parametrised by a finite graded module over the invariant subalgebra $S^G$, which we call $\Hom^{\oplus}_{kG}(M,S)$. In particular, the multiplicity of $M$ as a summand of $S_i$ is equal to $\dim_k \Hom^{\oplus}_{kG}(M,S)_i$. This allows us to attach to these multiplicities various invariants from commutative algebra, such as Krull dimension and depth.

Recall that one can associate to a finite dimensional indecomposable $kG$-module $M$ a vertex $P$ (up to conjugacy), which is a $p$-subgroup, and a source $W$, which is a $kP$-module. The inertia subgroup of $W$ is $I = \{ g \in N_G(P) \mid {}^g W \cong W \}$. 

\begin{theorem}(cf.\ Theorem~\ref{th:sum})
With the notation above, either $\Hom_{kG}^{\oplus}(M,k[V])=0$ or we have
\[
\dim \Hom_{kG}^{\oplus}(M,k[V]) \leq \dim_k V^P
\]
and
\[
\depth \Hom_{kG}^{\oplus}(M,k[V]) \geq \dim_k V^{I_p},
\]
where $I_p$ is a Sylow $p$-subgroup of $I$.
\end{theorem}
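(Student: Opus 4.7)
The plan is to analyze $\Hom^{\oplus}_{kG}(M, k[V])$ by reducing to $p$-local data (the vertex $P$ and inertia $I$) and then extracting geometric bounds from the fixed-point subschemes $V^P$ and $V^{I_p}$. The first step would be to set up a compatibility statement for $\Hom^{\oplus}$ under Green correspondence: for indecomposable $M$ with vertex $P$, source $U$, and inertia $I$, the summand-parametrizing modules $\Hom^{\oplus}_{kG}(M, k[V])$ and $\Hom^{\oplus}_{kI}(\hat{M}, k[V])$, where $\hat{M}$ is the corresponding indecomposable $kI$-module, should be related through the module-finite extension of invariant rings $k[V]^G \subset k[V]^I$. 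This is the natural reduction because summands of $\Ind_I^G \hat{M}$ isomorphic to $M$ are controlled by $\hat{M}$-summands at the inertia level, and module-finite extensions preserve Krull dimension while being manageable for depth.

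For the Krull-dimension bound, once the analysis is pushed down to the vertex $P$, the key point is that the Brauer construction applied to $k[V]$ yields $k[V^P]$, since the trace ideals from proper subgroups of $P$ vanish on the fixed locus. Any summand with vertex exactly $P$ contributes nontrivially to this Brauer quotient, so $\Hom^{\oplus}_{kP}(U, k[V])$ is a finitely generated module over $k[V]^P$ whose support lies in the image of $V^P$ in $\Spec k[V]^P$. This forces its Krull dimension to be at most $\dim_k V^P$, and the bound passes back up to $G$ through the module-finite extensions from the reduction step.

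For the depth bound, the strategy is to choose a homogeneous system of parameters of $k[V^{I_p}]$ of length $\dim_k V^{I_p}$, lift it via transfer/norm maps to a sequence in $k[V]^G$, and then show that this sequence is regular on $\Hom^{\oplus}_{kG}(M, k[V])$. The reason $I_p$ is the critical subgroup is that away from $V^{I_p}$, the $p$-part of every point-stabilizer is a proper subgroup of $I_p$; this should force the summand-parametrizing module to behave like a free module in those directions, so that the Koszul complex attached to the lifted sequence is acyclic in positive degrees on $\Hom^{\oplus}_{kG}(M, k[V])$.

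The main obstacle will be the depth bound. Unlike Krull dimension, depth is not controlled by support, so transferring regular sequences through the Green-correspondence reduction and between the various invariant rings requires genuine module-theoretic vanishing, not just comparison of supports. I anticipate needing a local-cohomology or Cech-cohomology vanishing statement of the type developed in \cite{Sy-Cech}, used to identify the local cohomology of $\Hom^{\oplus}_{kG}(M, k[V])$ with quantities controlled by the smaller fixed locus $V^{I_p}$, thereby certifying the regularity of the lifted sequence on the summand-parametrizing module itself.
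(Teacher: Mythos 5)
Your overall outline points in the right direction: reduce to the inertia group via Green correspondence (this is the paper's Proposition~\ref{p:inertia}), use fixed-point geometry for the dimension bound (this is Corollary~\ref{cor:bound}), and then obtain the depth bound from $V^{I_p}$. The dimension half is essentially what the paper does. The depth half, however, contains a genuine gap, and the mechanism you suggest is not the one that makes the argument work.

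The difficulty you flagged yourself --- that depth is not controlled by support --- is real, and your proposed remedy (lift a homogeneous system of parameters of $k[V^{I_p}]$ via transfer or norm to $k[V]^G$ and prove regularity on $\Hom^{\oplus}_{kG}(M,k[V])$ by a Koszul or local-cohomology argument) does not resolve it. The central obstacle is that $\Hom^{\oplus}_{kG}(M,k[V])$ is a quotient of $\Hom_{kG}(M,k[V])$ by $J_G(M,k[V])$, and regular sequences do not descend to quotients; there is no general support-theoretic or \v{C}ech-theoretic principle that would certify regularity of a transferred sequence on this quotient. Your geometric intuition for why $I_p$ appears --- that away from $V^{I_p}$ the $p$-part of point-stabilizers drops --- is also misplaced: $I$ is the inertia group of the \emph{module} $M$ (the stabilizer in $N_G(P)$ of the source), a purely representation-theoretic datum, and it need not bear any relation to scheme-theoretic stabilizers of points of $\Spec k[V]$.

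The paper's actual mechanism has two ingredients that your proposal does not anticipate. First, Proposition~\ref{p:red} uses the almost split sequence starting at $M$ to present $\Hom^{\oplus}_{kG}(M,-)$ as the cokernel of a map of representable functors, and then exhibits this cokernel as a functorial summand of $F(-\!\downarrow^G_{G_p})$ for some additive functor $F$ on $kG_p$-modules. This step is essential: it replaces the awkward functor $\Hom^{\oplus}_{kG}(M,-)$, which is not representable and not obviously a Mackey functor, by something that sees only the $p$-group. Second, Proposition~\ref{p:tensoroff} supplies an explicit $kP$-equivariant tensor decomposition $k[V]\cong k[y_1,\ldots,y_m]\otimes_k U$ with $m=\dim_k V^P$, and any $k$-linear functor commuting with direct sums then satisfies $F(k[V])\cong k[y_1,\ldots,y_m]\otimes_k F(U)$. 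This already has depth at least $m$ as a $k[y_1,\ldots,y_m]$-module by pure base change over a field --- no regularity needs to be checked, which is precisely what sidesteps the obstacle above. Combining these gives Lemma~\ref{l:plusdepth}, bounding the depth below by $\dim_k V^{G_p}$; applying Proposition~\ref{p:inertia} first, so that Lemma~\ref{l:plusdepth} is invoked over the group $I$ rather than $G$, improves the bound to $\dim_k V^{I_p}$.
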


We also consider various other functors on $kG$-modules that we apply to $S$, all of them related in some way to $\Hom_{kG}(M,S)$, for example $\Ext^i_{kG}(M,S)$.

Of particular interest is the module of invariant summands $S^{\oplus G} = \Hom^{\oplus}_{kG}(k,S)$, which turns out to have a natural ring structure. We also consider the Brauer quotient $S^{[G]}= S^G/ \sum_{p | [G:H]} \tr^G_H S^H$, which is also a ring. It is known that if $P$ is a $p$-group and $X$ is a $P$-set then $k[X]^{[P]} \cong k[X^P]$ (the Brauer construction, see e.g.\ \cite{Bro}), which motivates the next result.

\begin{theorem}(cf.\ Theorem~\ref{t:equiv})
Both $S^{\oplus G}$ and $S^{[G]}$ have spectra homeomorphic to $\Spec(S)^P/N_G(P)$, where $P$ is a Sylow $p$-subgroup of $G$. In particular, they both have dimension $\dim \Spec(S)^P$.
\end{theorem}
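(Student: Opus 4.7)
The plan is to pass through a Sylow $p$-subgroup $P$ of $G$ and then descend via the $p'$-quotient $W := N_G(P)/P$, using that $W$-invariants commute with reduction and correspond to the topological quotient on spectra. First I would reduce both rings to the case $G = P$. For the Brauer quotient, a Mackey decomposition of $\Res^G_P \circ \tr^G_H$ (for $H$ with $p \mid [G:H]$) shows its image falls in $\sum_{Q<P} \tr^P_Q S^Q$, since each $P$-Mackey component $P \cap gHg^{-1}$ is a proper subgroup of $P$. Combined with the retraction $\tfrac{1}{[G:P]}\tr^G_P \colon S^P \to S^G$, this gives $S^{[G]} \cong (S^{[P]})^W$. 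For the summand ring, Green correspondence applied in each graded degree identifies $S^{\oplus G} \cong (S^{\oplus P})^W$.

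For the case $G = P$, I would construct natural maps $\psi : S^{\oplus P} \to S^{[P]}$ (the inclusion $S^{\oplus P} \subseteq S^P$ composed with the projection $S^P \twoheadrightarrow S^{[P]}$) and $\phi : S^{[P]} \to S/J$, where $J \subseteq S$ is the ideal generated by $\{gs - s : g \in P, s \in S\}$, which cuts out $\Spec(S)^P \hookrightarrow \Spec(S)$. That $\phi$ is well-defined uses that, for $Q < P$ and $s \in S^Q$, $\tr^P_Q(s) = \sum_{gQ \in P/Q} gs \equiv [P:Q]\,s \equiv 0 \pmod J$ in characteristic $p$. The classical Brauer geometric theorem, proved by induction along a filtration of $S$ by $P$-stable graded ideals, gives that both $\ker \phi$ and $\mathrm{coker}\,\phi$ are nilpotent, so $\Spec \phi$ is a homeomorphism. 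For $\psi$, injectivity follows by decomposing $S = U \oplus V$ as $kP$-modules in each degree, with $U$ the sum of trivial summands: since $U^Q = U$ and $\tr^P_Q U = [P:Q]\,U = 0$, the image of $\sum_{Q<P}\tr^P_Q S^Q$ lies in $V^P$, which meets $U$ trivially. The cokernel of $\psi$ is spanned by classes from indecomposable summands of $V$ with vertex $P$ but source $\neq k$, and I would argue these are nilpotent in $S^{[P]}$ using that products of such ``intermediate'' contributions eventually decompose into projective summands of $S$, which lie in $\sum_{Q<P}\tr^P_Q S^Q$.

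The main obstacle is this last nilpotence statement for $\mathrm{coker}\,\psi$: while the analogous statement for $\mathrm{coker}\,\phi$ is classical, controlling the multiplicative behavior of intermediate-source classes in $S^{[P]}$ requires a representation-theoretic argument about tensor products of indecomposable $kP$-modules and their interaction with the ring structure of $S$. Once this is done, combining with the Sylow reduction yields $\Spec(S^{\oplus G}) \cong \Spec(S^{[G]}) \cong \Spec(S)^P / W = \Spec(S)^P / N_G(P)$.
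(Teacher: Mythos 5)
Your construction of $\psi\colon S^{\oplus P}\to S^{[P]}$ rests on an ``inclusion $S^{\oplus P}\subseteq S^P$'' that does not exist canonically: both $S^{\oplus P}$ and $S^{[P]}$ are \emph{quotients} of $S^P$, by $J_P(k,S)$ and by the transfer ideal $T_P=\sum_{Q<P}\tr^P_QS^Q$ respectively. Since the transfer of an element factors through a relatively $Q$-projective module, which cannot contain $k$ as a summand when $Q<P$, one has $T_P\subseteq J_P(k,S)$; the natural ring homomorphism therefore runs the \emph{other} way, $S^{[P]}\twoheadrightarrow S^{\oplus P}$. You can of course lift $S^{\oplus P}$ to a complement $U^P\subseteq S^P$ by choosing a decomposition $S=U\oplus V$ in each degree, but $U$ is not a subring of $S$, so this identification does not make $\psi$ a ring map, and the ring structure on $S^{\oplus P}$ (described after Lemma~\ref{l:JK}) is the quotient structure on $S^P/J_P$, not the restriction of multiplication to $U$. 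Without $\psi$ being multiplicative, the plan to bound $\operatorname{coker}\psi$ by nilpotents and deduce a homeomorphism of spectra cannot even get started, independently of the representation-theoretic gap you flag.

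The paper dissolves the obstacle rather than facing it: it uses the natural surjection $\iota_1\colon S^{[G]}\twoheadrightarrow S^{\oplus G}$ together with a further natural surjection $\iota_3\colon S^{\oplus G}\twoheadrightarrow R/(R\cap I)$, where $I=I(\Syl_p(G),S)$ is exactly the ideal whose vanishing locus is $\bigcup_P\Spec(S)^P$. It then proves in Theorem~\ref{t:inc} (via the Nullstellensatz, reducing to a single $G$-orbit) that the composite $S^{[G]}\to R/(R\cap I)$ has nilpotent kernel; since $\iota_1$ and $\iota_3$ are both surjective and their composite has nilpotent kernel, each of them does too, giving what you wanted from $\operatorname{coker}\psi$ for free. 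The extra maps $\rho_1,\rho_2$ and a norm/transfer argument then upgrade to universal homeomorphisms. Also worth noting: your initial Sylow reduction $S^{\oplus G}\cong(S^{\oplus P})^W$ via ``Green correspondence in each degree'' is nonstandard and needs an argument (Green correspondence runs between $G$ and $N_G(P)$, not between $G$ and $P$, and the descent from $N_G(P)$ to $P$-summands and back is not formal); the paper avoids ever having to prove such an isomorphism, since the ideal $I$ and the invariant ring $R$ carry the full group action throughout, and the identification with $\Spec(S)^P/N_G(P)$ appears only at the very end through the Example following Theorem~\ref{t:inc}.
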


In Section~\ref{s:notation} we describe the functors on $kG$-modules that we will consider, in particular $\Hom^{\oplus}_{kG}(M,-)$. Sections~\ref{s:fix}--\ref{s:functor} deal with general rings $S$, while Sections~\ref{s:depth}--\ref{s:sum} are mostly concerned with polynomial rings. We conclude in Section~\ref{s:examples} with a couple of simple examples.
 
\section{Notation}
\label{s:notation}

Let $k$ be a field of characteristic $p$. We allow $p=0$, but most of our results are either known or trivial in this case (cf.\ \cite{Stanley}). All groups will be finite and $G_p$ will denote a Sylow $p$-subgroup of the group $G$ (and $G_0=1$). 

Let $R$ be a commutative graded $k$-algebra. Let $G$ be a finite group and let $S$ be a graded $RG$-module.  We are most most concerned with the case when $S$ is a $k$-algebra and $G$ acts on it by algebra automorphisms; $R$ will be a sub-algebra of the invariant sub-algebra $S^G$. The case about which we have most to say is when $S=k[V]$ for some finite dimensional $kG$-module $V$.

Let $M$ be a finite dimensional indecomposable $kG$-module. We wish to describe the part of $S$ that is in some way related to $M$. Note that $E_M=\End_{kG}(M)$ is a local ring and $\overline{E}_M:=E_M/\rad(E_M)$ is a division $k$-algebra.

First we consider $\Hom_{kG}(M,S)$. In the case when $S=k[V]$, this is isomorphic to the classical module of covariants $k[V,M^*]^G$, where $M$ is the $k$-dual of $M$. Note that $\Hom_{kG}(M,S)$ is naturally a right $E_M$-algebra. If $M$ is simple then $E_M$ is already a division algebra and $\dim_{E_M}\Hom_{kG}(M,S)_i$ is equal to the largest cardinal $r$ such that $M^{(r)}$ can be seen as a $kG$-submodule of $S_i$. We will be interested in the structure of $\Hom_{kG}(M,S)$ as a left $E_M^{\op}\otimes _k R$-module. If $M$ is simple and $P_M$ is its projective cover, then, for any graded $kG$-module $N$,  $\dim_k \Hom_{kG}(P_M,N)_i/ \dim_k E_M$, if finite, is equal to the number of times that $M$ appears in a composition series for $N_i$. (This is clear when $N$ is simple and also the functor $\Hom_{kG}(P_M,-)_i$ is exact, so use induction on the number of times $M$ occurs as a composition factor of $N_i$.)

Next we assume that $k$ has characteristic $p$ and we are given some set $\mathcal X$ of $p$-subgroups of $G$. Given $kG$-modules $M$ and $N$, define $\Hom_{kG}^{\mathcal X}(M,N)$ to be the quotient of $\Hom_{kG}(M,N)$ by the sum of the images of the usual transfer maps $\tr^G_P$ from $\Hom_{kP}(M,N)$ for all $P \in \mathcal X$. When $\mathcal X = \{ 1 \}$, this is just the Tate $\Ext$-group $\widehat{\Ext}^0_{kG}(M,N)$. We will sometimes write $\widehat{\Hom}_{kG}(M,N)$ for $\widehat{\Ext}^0_{kG}(M,N)$ and $\hat{H}^0(N)$ for $\widehat{\Ext}^0_{kG}(k,N)$. When $\mathcal X$ consists of all the $p$-subgroups of $G$ of index divisible by $p$, then we write $\Hom_{[kG]}(M,N):=\Hom_{kG}^{\mathcal X}(M,N)$ and $N^{[G]} := \Hom_{kG}^{\mathcal X}(k,N)$, the Brauer construction on $N$. The value of $\Hom_{kG}^{\mathcal X}(M,N)$ does not change if we close $\mathcal X$ under $p$-subgroups and conjugation, and usually we will assume that this has been done.  

Finally, we want to identify the $kG$-summands of $S$ that are isomorphic to $M$; we assume that $M$ is finite dimensional and indecomposable. Define $J_G(M,S)$ to be the subset of $\Hom_{kG}(M,S)$ consisting of homomorphisms that are \emph{not} split injective. This is certainly closed under multiplication by an element of $R$; it is also closed under addition, because if $f,g \in J_G(M,S)$ and $f+g$ is split injective, say $s(f+g)=\Id_M$, then, since $\End_{kG}(M)$ is local, either $sf$ or $sg$ is an automorphism, so either $f$ or $g$ is split injective, a contradiction.

We define $\Hom^{\oplus}_{kG}(M,S)=\Hom_{kG}(M,S)/J_G(M,S)$; it is naturally an $R$-module. It is actually equal to  $\Hom_{kG}(M,S)/\rad \Hom_{kG}(M,S)$, where $\rad$ denotes the radical of the category of $kG$-modules \cite{ARS, Kelly}, but we will not use this fact. 

\begin{lemma}
\label{l:sum}
For any finite dimensional indecomposable $kG$-module $M$, we have:
\begin{enumerate}
\item
$\Hom^{\oplus}_{kG}(M,-)$ commutes with direct sums.
\item
$\Hom^{\oplus}_{kG}(M,M) = \overline{E}_M$ and $\Hom^{\oplus}_{kG}(M,-)$ is naturally a right $\overline{E}_M$-module.
\item
If $X$ is a  $kG$-module with no summand isomorphic to $M$ then $\Hom^{\oplus}_{kG}(M,X) = 0$.
\end{enumerate}

\end{lemma}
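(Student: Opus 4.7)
The plan is to handle the three parts in sequence; each reduces to a short argument using that $M$ is finite dimensional and indecomposable (so $E_M$ is local).

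\textbf{Part (3) first, since it is shortest.} Suppose $f\colon M\to X$ is split injective, with splitting $s\colon X\to M$. Then $X=\Ima(f)\oplus\Ker(s)$, and $\Ima(f)\cong M$ because $f$ is injective. So $X$ has a summand isomorphic to $M$, contradicting the hypothesis. Hence every $f\in\Hom_{kG}(M,X)$ lies in $J_G(M,X)$, giving $\Hom^{\oplus}_{kG}(M,X)=0$.

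\textbf{Part (2).} Any endomorphism $f\colon M\to M$ of a finite dimensional module is split injective if and only if it is injective if and only if it is an isomorphism (by dimension), which is the same as being a unit in $E_M$. Since $E_M$ is local, its non-units are exactly $\rad(E_M)$. Thus $J_G(M,M)=\rad(E_M)$, and dividing out gives $\Hom^{\oplus}_{kG}(M,M)=E_M/\rad(E_M)=\overline{E}_M$.

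\textbf{Part (1), where the main content sits.} Since $M$ is finitely generated, $\Hom_{kG}(M,-)$ commutes with direct sums, so $\Hom_{kG}(M,\bigoplus_i X_i)=\bigoplus_i\Hom_{kG}(M,X_i)$. I would then show the compatible identification $J_G(M,\bigoplus_i X_i)=\bigoplus_i J_G(M,X_i)$, from which the claim follows at once. Write $f=(f_i)$ for a map $M\to\bigoplus_i X_i$, with only finitely many $f_i$ nonzero. If some $f_{i_0}$ is split injective with splitting $s_{i_0}$, extending by zero yields a splitting of $f$, so $f$ is split injective. Conversely, suppose $f$ has a splitting $s$; then $\sum_i s|_{X_i}\circ f_i=\Id_M$ in $E_M$, a finite sum over the indices where $f_i\ne 0$. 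Because $E_M$ is local, the non-units form a two-sided ideal, so not all $s|_{X_i}\circ f_i$ can be non-units; some $s|_{X_{i_0}}\circ f_{i_0}$ is an isomorphism, which exhibits $f_{i_0}$ as split injective. Hence $f\notin J_G(M,\bigoplus_i X_i)$ iff some $f_i\notin J_G(M,X_i)$, which is the required identification.

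The only step that requires any thought is the converse direction in part (1): one must invoke locality of $E_M$ (equivalently, that the sum of non-units in a local ring is a non-unit) to conclude that split injectivity of $f$ forces split injectivity of one of the components $f_i$. Everything else is a direct unwinding of the definition of $J_G(M,-)$.
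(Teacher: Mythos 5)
Your proof is correct and follows the same essential route as the paper's: part (3) is the observation that every map to such an $X$ fails to be split injective, part (2) identifies $J_G(M,M)$ with $\rad(E_M)$, and part (1) reduces to showing $J_G(M,-)$ commutes with direct sums using that $E_M$ is local. The only presentational difference is that you make the ``converse'' direction of part (1) explicit via locality of $E_M$, whereas the paper treats that direction as already settled by its earlier remark that $J_G(M,S)$ is closed under addition (which it proved using the same locality argument); your version is a little more self-contained but not genuinely different.
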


\begin{proof}
$\Hom_{kG}(M,-)$ commutes with direct sums, because $M$ is finite dimensional. Let $S=\oplus T_{\lambda}$, a direct sum of indecomposable submodules, and let $p_{\lambda}$ denote projection onto $T_{\lambda}$. If $M \xrightarrow{f} S$ is not split injective then no $M \xrightarrow{p_{\lambda}f} T_{\lambda}$ can be split injective; since $M$ is finite dimensional, only a finite number of the $p_{\lambda}f$ are non-zero and $f$ is the sum of these. Thus $J_G(M,-)$ commutes with direct sums too.

The second part is because the radical of $E_M$ consists of the non-automorphisms, which correspond to the elements of $J_G(M,M)$.

For the third part, notice that clearly  $\Hom_{kG}(M,N) = J_G(M,N)$ in this case.
\end{proof}

The following proposition concerns infinite dimensional modules. The reader who is only interested in rings that are finite dimensional in each degree can safely skip the proof, since it is an easy consequence of the Krull-Schmidt Theorem in that case. But even in such a context, infinite dimensional modules can arise naturally, for example upon localising. We thank Mike Prest for his assistance with this material.

\begin{proposition}
\label{p:decomp}
Given a finite dimensional indecomposable $kG$-module $M$, any $kG$-module $X$ can be decomposed as $X=Y \oplus Z$, where $Y$ is isomorphic to a sum of copies of $M$ and $Z$ has no summands isomorphic to $M$.
\end{proposition}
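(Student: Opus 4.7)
The key input is that, being finite dimensional and indecomposable, $M$ has local endomorphism ring $E_M=\End_{kG}(M)$ by Fitting's lemma; consequently every module of the form $M^{(I)}$ enjoys the (full) exchange property by Warfield's theorem. My plan is to apply Zorn's lemma to extract a maximal $M$-isotypic direct summand of $X$.

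I would consider the poset $\mathcal P$ whose elements are pairs $(Y,Z)$ with $X=Y\oplus Z$ and $Y\cong M^{(I)}$ for some index set $I$, ordered by declaring $(Y,Z)\leq (Y',Z')$ iff $Y\subseteq Y'$, $Z'\subseteq Z$, and the inclusion $Y\hookrightarrow Y'$ splits with a complement lying inside $Z$ (equivalently, $Z=(Y'\cap Z)\oplus Z'$). The zero pair $(0,X)$ provides a basepoint. To verify the hypothesis of Zorn's lemma one must show that every chain $((Y_\alpha,Z_\alpha))_{\alpha<\kappa}$ admits an upper bound; the natural candidate is $Y_\infty=\bigcup_\alpha Y_\alpha$ and $Z_\infty=\bigcap_\alpha Z_\alpha$. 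Confirming that $X=Y_\infty\oplus Z_\infty$ is the main technical obstacle. The exchange property is deployed at each successor stage: given $X=Y_\alpha\oplus Z_\alpha$ and $Y_{\alpha+1}=Y_\alpha\oplus W$ with $W\cong M^{(I_{\alpha+1}\setminus I_\alpha)}$, the fact that $W$ has the exchange property lets us split $W$ off as a summand of the previous complement $Z_\alpha$, so the $Z_\alpha$ can be arranged to form a decreasing chain. An element-wise check using that each $x\in X$ has only finitely many nonzero coordinates under the retraction onto $Y_\alpha$ (the support over $I_\alpha$) then gives $X=Y_\infty\oplus Z_\infty$.

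Once a maximal $(Y,Z)\in\mathcal P$ is secured, the residual $Z$ can have no direct summand isomorphic to $M$: otherwise $Z=M'\oplus Z'$ with $M'\cong M$, and the pair $(Y\oplus M',Z')$ would strictly enlarge $(Y,Z)$, contradicting maximality. This produces the desired decomposition $X=Y\oplus Z$ with $Y$ a direct sum of copies of $M$ and $Z$ free of $M$-summands. The hard part, as flagged, is purely set-theoretic bookkeeping at limit ordinals of the Zorn chain; all representation-theoretic content is concentrated in the single fact that $E_M$ is local and the resulting exchange property for $M^{(I)}$.
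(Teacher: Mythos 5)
Your plan diverges from the paper's proof, which works with the poset of \emph{pure} submodules $Y\leq X$ isomorphic to sums of copies of $M$ and uses $\Sigma$-pure-injectivity of $M$ (a consequence of finite endolength) to show that the union of a chain of pure submodules is again pure, hence a direct summand. You instead try to carry the complement $Z$ along in the poset and take $Y_\infty=\bigcup Y_\alpha$, $Z_\infty=\bigcap Z_\alpha$ at a limit stage. This does not work: the natural candidate fails to be a decomposition. Take $X=\bigoplus_{n\geq 1}M_n$ with each $M_n\cong M$, set $Y_1=M_1$, $Z_1=\bigoplus_{n\geq 2}M_n$, and for $n\geq 2$ let $Y_n=Y_{n-1}\oplus\widetilde M_n$ where $\widetilde M_n=\{(0,\ldots,0,m,m,0,\ldots):m\in M\}$ is the ``diagonal'' copy sitting in coordinates $n,n+1$, with $Z_n=\bigoplus_{i>n}M_i$. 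One checks that $(Y_n,Z_n)$ is a chain in your poset (including the splitting condition $Z_{n-1}=(Y_n\cap Z_{n-1})\oplus Z_n$), yet $Z_\infty=0$ while $Y_\infty\neq X$: for instance $(0,m,0,0,\ldots)$ with $m\neq 0$ is not in $Y_\infty$, since writing it as a finite sum of diagonal elements forces an infinite string of nonzero coordinates. So $Y_\infty\oplus Z_\infty\subsetneq X$ and your chain has no upper bound of the proposed form. The ``element-wise check'' you sketch also fails here: the components $y_n$ of a fixed $x$ under the successive retractions do not stabilise but drift to ever higher coordinates.

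The exchange property cannot rescue this on its own, because it lets you rearrange a \emph{known} direct sum decomposition, whereas the obstacle at a limit ordinal is to show that $Y_\infty$ is a summand at all; some form of (pure-)injectivity is what provides the complement after the fact, which is exactly what the paper exploits. A secondary issue is the attribution: Warfield's theorem says a single indecomposable $M$ has the exchange property iff $\End(M)$ is local, but it does \emph{not} give the full exchange property for arbitrary $M^{(I)}$. That stronger fact is true here, but it follows either from pure-injectivity (Zimmermann-Huisgen and Zimmermann) or from local semi-T-nilpotency in Harada's sense, both of which again use that $M$ is finite dimensional; citing Warfield alone overreaches. If you want to keep your poset-of-pairs formulation, the minimal fix is to drop the intersection $\bigcap Z_\alpha$, argue instead that $Y_\infty$ is pure in $X$ and pure-injective (since $M$ has finite endolength), deduce that $Y_\infty$ splits off, and only then choose a complement $Z_\infty$ — at which point you have essentially reconstructed the paper's argument.
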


\begin{proof} The module $M$ is finite dimensional, so it has finite endolength and hence is  $\Sigma$-pure-injective \cite[4.4.5 and example afterwards]{Prest}.

Consider all the submodules $U$ of $X$ that are isomorphic to a sum of copies of $M$ and are purely embedded in $X$. Give them a partial ordering by inclusion. By the definition of $\Sigma$-pure-injective, it follows that each $U$ is pure-injective. By the definition of pure-injective and the fact that $U$ is purely embedded in $X$, it follows that $U$ is a summand of $X$ and hence, if $U \leq U' \leq X$, then $U$ is a summand of $U'$.

  Consider any chain in this ordering.  The union of these submodules is isomorphic to a sum of copies of $M$ and, since a direct limit of pure embeddings is a pure embedding \cite[2.1.2]{Prest}, the union is  purely embedded in $X$. By Zorn's Lemma we obtain a submodule $Y \leq X$ that is a maximal subject to being purely embedded and isomorphic to a sum of copies of $M$. By the definition of $\Sigma$-pure-injective, it follows that $Y$ is pure-injective. By the definition of pure-injective and the fact that $Y$ is purely embedded in $X$, it follows that $Y$ is a summand of $X$. We take $Z$ to be a complement.

By the maximality of $Y$, it follows that $Z$ can have no summand isomorphic to $M$.
\end{proof} 

\begin{lemma}
\label{l:summands}
$\dim_{\overline{E}_M} \Hom^{\oplus}_{kG}(M,S)$ is equal to the largest cardinal $r$ such that $M^{(r)}$ can be seen as a $kG$-summand of $S$. If $S$ is graded then $\dim_{\overline{E}_M} \Hom^{\oplus}_{kG}(M,S)_i$ is equal to the largest cardinal $r$ such that $M^{(r)}$ can be seen as a $kG$-summand of $S_i$.
\end{lemma}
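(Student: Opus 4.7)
The plan is to use Proposition~\ref{p:decomp} to obtain a decomposition $S=Y\oplus Z$ of $kG$-modules in which $Y$ is a sum of copies of $M$, say $Y\cong M^{(s)}$, and $Z$ has no summand isomorphic to $M$. The goal then is to identify $s$ with both quantities in the statement.

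First I would compute $\dim_{\overline{E}_M}\Hom^{\oplus}_{kG}(M,S)$ directly from this decomposition. By Lemma~\ref{l:sum}(1), $\Hom^{\oplus}_{kG}(M,S)=\Hom^{\oplus}_{kG}(M,Y)\oplus\Hom^{\oplus}_{kG}(M,Z)$. The second factor vanishes by Lemma~\ref{l:sum}(3). Applying part (1) once more to $Y=M^{(s)}$ together with part (2) yields $\Hom^{\oplus}_{kG}(M,M^{(s)})\cong\overline{E}_M^{(s)}$, whose $\overline{E}_M$-dimension is $s$. Here I need to check that the proof of Lemma~\ref{l:sum}(1) given in the paper really handles direct sums of arbitrary cardinality; inspection shows it does, since it relies only on the finite-dimensionality of $M$ to conclude that any $f\colon M\to\bigoplus_{\lambda}T_{\lambda}$ hits only finitely many components.

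Next I would verify that this $s$ is actually the supremum of cardinals $r$ for which $M^{(r)}$ is a summand of $S$. Given any decomposition $S\cong M^{(r)}\oplus S'$, applying $\Hom^{\oplus}_{kG}(M,-)$ and repeating the computation above produces $\overline{E}_M^{(r)}$ as an $\overline{E}_M$-summand of $\Hom^{\oplus}_{kG}(M,S)$, hence $r\leq s$. Since $s$ itself is realised by the decomposition coming from Proposition~\ref{p:decomp}, it is the largest such cardinal.

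Finally, for the graded statement I would reduce to the ungraded one degree by degree. View $M$ as concentrated in degree $0$ (or any chosen degree) so that graded homomorphisms of degree $i$ from $M$ to $S$ correspond to $kG$-homomorphisms $M\to S_i$; thus $\Hom_{kG}(M,S)_i=\Hom_{kG}(M,S_i)$ as $kG$-modules. The compatibility $J_G(M,S)_i=J_G(M,S_i)$ is then the only thing to verify, and it holds because a map $f\colon M\to S_i$ splits in $S$ if and only if it splits in $S_i$ (one direction by post-composing a splitting with projection to $S_i$, the other by extending a splitting by zero on the remaining degrees). Hence $\Hom^{\oplus}_{kG}(M,S)_i\cong\Hom^{\oplus}_{kG}(M,S_i)$, and applying the ungraded result to the $kG$-module $S_i$ finishes the proof. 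The main obstacle is purely a matter of bookkeeping: ensuring that Lemma~\ref{l:sum} applies to infinite direct sums and that the grading passes correctly through the quotient defining $\Hom^{\oplus}_{kG}$.
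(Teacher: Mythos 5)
Your proposal is correct and follows exactly the route the paper intends: the paper's proof is literally ``Use Proposition~\ref{p:decomp} and Lemma~\ref{l:sum},'' and you have supplied the straightforward details, including the check that Lemma~\ref{l:sum}(1) handles direct sums of arbitrary cardinality and the degree-by-degree reduction for the graded case. One small wording slip: the identification $\Hom_{kG}(M,S)_i \cong \Hom_{kG}(M,S_i)$ is as $k$-vector spaces (or $\overline{E}_M$-modules), not $kG$-modules.
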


\begin{proof}
Use Proposition~\ref{p:decomp} and Lemma~\ref{l:sum}.
\end{proof}

The summands isomorphic to $M$ from Proposition~\ref{p:decomp} can be realised as follows. Take a basis for $\Hom^{\oplus}_{kG}(M,S)$ over $\dim_{\overline{E}_M}$ and lift the elements to $\Hom_{kG}(M,S)$. The images of $M$ in $S$ under each of these form the summands of $S$ isomorphic to $M$.

Let $K_G(M,S) \leq \Hom_{kG}(M,S)$ be the $k$-subspace spanned by the  homomorphisms that factor through some finite dimensional indecomposable $kG$-module that is \emph{not} isomorphic to $M$. This is clearly an $R$-submodule.

\begin{lemma}
\label{l:JK}
We have $J_G(M,S)=K_G(M,S)$.
\end{lemma}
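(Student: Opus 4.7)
For the easy inclusion $K_G\subseteq J_G$, I would argue from the definition: if $f\in\Hom_{kG}(M,S)$ factors as $M\xrightarrow{g} N\xrightarrow{h} S$ with $N$ finite-dimensional indecomposable and $N\not\cong M$, then a retraction $s$ of $f$ would give $(sh)g=\Id_M$, making $M$ a summand of the indecomposable $N$---a contradiction. Hence every such $f$ lies in $J_G$, and since $J_G$ is a $k$-subspace, $K_G\subseteq J_G$.

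For the reverse inclusion $J_G\subseteq K_G$, my plan is to split on whether $f:M\to S$ (not split injective) is itself injective. If $f$ is not injective, set $K=\ker(f)\neq 0$; then $\dim_k M/K<\dim_k M$, so every indecomposable summand $P_j$ of a Krull--Schmidt decomposition $M/K=\bigoplus_j P_j$ has $P_j\not\cong M$ on dimension grounds. Factoring $f$ as $M\twoheadrightarrow M/K\hookrightarrow S$ and expanding in the $P_j$ realises $f$ as a sum of maps through indecomposables $\not\cong M$, so $f\in K_G$. If instead $f$ is injective but not split, then $f(M)\cong M$ sits in $S$ without being a summand. Here I plan to mimic the pure-injectivity argument of Proposition~\ref{p:decomp}: since $M$ is $\Sigma$-pure-injective and $f(M)$ fails to be a summand of $S$, $f(M)$ must fail to be purely embedded in $S$; purity is testable against tensor products with finite-dimensional modules, and $S$ is the filtered union of its finite-dimensional submodules, so I can locate a finite-dimensional $W$ with $f(M)\subseteq W\subseteq S$ in which $f(M)$ is also not a summand. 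I would then decompose $W=\bigoplus_{j=1}^{t}W_j$ into indecomposables and write the inclusion $f(M)\hookrightarrow W$ as $(h_j)_j$ with $h_j:f(M)\to W_j$. Case by case: if $W_j\not\cong M$, the contribution $M\xrightarrow{\sim}f(M)\xrightarrow{h_j}W_j\hookrightarrow S$ already factors through an indecomposable $\not\cong M$ and lies in $K_G$; if $W_j\cong M$ but $h_j$ is not an isomorphism, then equal dimensions force $h_j$ not to be injective and the first (non-injective) case applies. No $h_j$ can be an isomorphism, for then $h_j^{-1}$ composed with the projection $W\twoheadrightarrow W_j$ would retract $f(M)\hookrightarrow W$, contradicting the choice of $W$. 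Summing over $j$ then places $f$ in $K_G$.

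The step I expect to be the main obstacle is exactly the passage from ``$f(M)$ is not a summand of $S$'' to ``$f(M)$ is not a summand of some finite-dimensional $W\subseteq S$''---the pure-injectivity input of the injective-but-not-split case. Everything else amounts to Krull--Schmidt applied to the finite-dimensional $W$ together with the elementary observation that a non-isomorphism between indecomposables of equal finite dimension is not injective.
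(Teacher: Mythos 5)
Your proposal is correct and follows essentially the same route as the paper: reduce to a finite-dimensional $kG$-submodule $W$ of $S$ containing $f(M)$, decompose $W$ by Krull--Schmidt, and observe that each component $M\to W_j$ either lands in $W_j\not\cong M$ or factors through a proper image (hence through indecomposables of smaller dimension), with the reduction to finite $W$ justified by pure-injectivity of $M$ together with the fact that a filtered colimit of split (equivalently, pure) embeddings is pure \cite[2.1.2--2.1.3]{Prest}. The paper runs this contrapositively (if $f(M)$ were a summand of every finite $L$ then $f$ would be pure, hence split) and does not need your preliminary injective/non-injective split, since a non-injective $f$ already has no isomorphism among the components $p_if$; but the mathematical content is the same.
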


\begin{proof}
First we show that $K_G(M,S) \leq J_G(M,S)$. Suppose that $M \xrightarrow{f} S$ in $K_G(M,S)$ factors as $M \xrightarrow{u} N \xrightarrow{v} S$, with $N$ indecomposable and not isomorphic to $M$. If $f$ is split injective then so is $u$, so $M$ is isomorphic to a summand of $N$, a contradiction.

Now we show that $J_G(M,S) \leq K_G(M,S)$.  Suppose that $M \xrightarrow{f} S$ is not split injective. Let $L \leq S$ be a finite dimensional $kG$-submodule containing $\Ima(f)$. Write $L = \oplus _iL_i$ as a sum of indecomposables and let $p_i$ denote projection onto $L_i$. Then $f = \sum_i p_if$ is in $K_G(M,S)$ unless some $L_i$ is isomorphic to $M$. Even then, if $p_if$ is not surjective it must factor through its image, which cannot be isomorphic to $M$. Thus $f \in K_G(M,S)$ unless some $p_if$ is an isomorphism and so $M \xrightarrow{f} L$ is split.

We see that $f \in K_G(M,S)$ unless $f$ is injective and $f(M)$ is a summand of every finite dimensional $kG$-submodule of $S$ containing it. But $S$ is the direct limit of the its finite dimensional submodules containing $f(M)$, and a direct limit of split embeddings is pure \cite[2.1.3]{Prest}. Thus $f$ is a pure embedding and since $M$ is pure injective (cf.\ the proof of Proposition~\ref{p:decomp}), $f$ is split.
\end{proof}

Of particular interest is $S^{\oplus G} :=\Hom^{\oplus}_{kG}(k,S)$, which corresponds to the trivial summands of $S$. If $S$ is a ring then so is $S^{\oplus G}$, by the product that sends $f$ and $g$ to 
\[k \rightarrow k \otimes_k k \xrightarrow{f \otimes g} S \otimes _k S \rightarrow S. \]
Notice that if $f$ factors through $X$, say, then $f \otimes g$ factors through $ X \otimes k $,  hence $J_G(k,S)$ is an ideal and the product is defined on the quotient. Similarly, any $\Hom^{\oplus}_{kG}(M,S)$ is naturally a module over $S^{\oplus G}$ and the action of $R$ factors through it.

If $M$ is an indecomposable $kG$-module with vertex not in $\mathcal X$, then there is a natural map $\Hom_{kG}^{\mathcal X}(M,S) \rightarrow \Hom^{\oplus}_{kG}(M,S)$. This is because if $f\! : M \rightarrow S$ is in the image of $\tr^G_P$ for some $P \in \mathcal X$ then $f$ factors through the module $\Ind^G_P \Res^G_P M$, which is projective relative to $\mathcal X$ and so cannot contain $M$ as a summand. In particular, there is a homomorphism of $k$-algebras $S^{[G]} \rightarrow S^{\oplus G}$ and the map $R \rightarrow S^{\oplus G}$ factors through it. Similarly, $\Hom_{kG}^{\mathcal X}(k,S)$ is a ring and $\Hom_{kG}^{\mathcal X}(M,S)$ is a module over it.

One of the main features of these definitions is that we always obtain an $R$-module,  so we can associate to it various invariants from commutative algebra.

We cannot, in general, expect $\Hom^{\oplus}_{kG}(M,S)$ to correspond to a submodule of $S$ as an $R$-module. The best we can do is to use the notion of a structure theorem from \cite{Sy4}. 

We choose a graded Noether normalization $k[d_1, \ldots , d_n]$ for $R$. Then, by \cite[3.1]{Sy4} and Lemma~\ref{l:summands}, $\Hom^{\oplus}_{kG}(M,S)$ has a structure theorem, meaning that there is a finite indexing set $J$ and for each $j \in J$ a subset $I_j \subseteq \{ 1, \ldots , n \}$ and a homogeneous element $f_j \in \Hom^{\oplus}_{kG}(M,S)$ such that the multiplication map $\oplus _{j \in J} (k[d_i]_{i \in I_j} \otimes _k f_j \overline{E}_M ) \rightarrow \Hom^{\oplus}_{kG}(M,S)$ is an isomorphism of right $\overline{E}_M$-modules.

Lift $f_j$ to a homogeneous element of $\Hom_{kG}(M,S)$ and let $M_j$ be its image. Then the multiplication map $\oplus_{j \in J} k[d_i]_{i \in I_j} \otimes _k M_j \rightarrow S$ is injective and its image is a $kG$-summand of $S$. It is maximal as a $kG$-summand that is a sum of copies of $M$. 

If we sum the left hand side over all isomorphism classes of indecomposable $kG$-summands of $S$, we obtain a $kG$-isomorphism with $S$. This is a structure theorem in the sense of \cite[1.1]{Sy4} when the number of isomorphism classes of indecomposable $kG$-summands of $S$ is finite.

\begin{lemma}
\label{l:Rmod}

If $R$ is Noetherian and $S$ is finite over $R$ then so are the $\Hom_{kG}^{\mathcal X}(M,S)$ for any $\mathcal X$, $\Hom^{\oplus}_{kG}(M,S)$, $S^{[G]}$ and $S^{\oplus G}$.
\end{lemma}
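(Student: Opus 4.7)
The plan is to reduce everything to $\Hom_{kG}(M,S)$ and show that this is a finitely generated $R$-module; all the other modules listed are $R$-module quotients of it (or special cases thereof), so they inherit the property.

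First I would observe that, because $M$ is finite dimensional over $k$ and $S$ is finite over $R$, the module $\Hom_k(M,S) \cong M^* \otimes_k S$ is finite over $R$. Since $R$ is noetherian, $\Hom_k(M,S)$ is a noetherian $R$-module. Now $\Hom_{kG}(M,S)$ is the $G$-fixed points of $\Hom_k(M,S)$ under the conjugation action of $G$, and this action is $R$-linear since $R$ acts through $S^G$. Hence $\Hom_{kG}(M,S)$ is an $R$-submodule of a noetherian $R$-module, so is itself finite over $R$.

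Next, by construction $\Hom_{kG}^{\mathcal X}(M,S)$ is an $R$-module quotient of $\Hom_{kG}(M,S)$, and $\Hom^{\oplus}_{kG}(M,S) = \Hom_{kG}(M,S)/J_G(M,S)$ is an $R$-module quotient as well (the $R$-linearity of $J_G(M,S)$ was noted in the paragraph defining it). Hence both are finite over $R$. Finally, $S^{[G]}$ and $S^{\oplus G}$ are simply the special cases with $M = k$, so they are covered as well.

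The only step that requires any thought is the identification of $\Hom_{kG}(M,S)$ as an $R$-submodule of the noetherian module $\Hom_k(M,S)$; once that is in place the argument is immediate from the noetherian property of $R$. There is no substantive obstacle here, only the routine check that the conjugation $G$-action on $\Hom_k(M,S)$ is $R$-linear, which follows because $R$ acts through the invariants $S^G$ and thus commutes with the $G$-action.
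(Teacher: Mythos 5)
Your proof is correct and follows essentially the same route as the paper: show $\Hom_k(M,S)$ is finite over $R$, observe $\Hom_{kG}(M,S)$ is an $R$-submodule hence finite by noetherianity, and then note the remaining modules are $R$-quotients. The extra remarks you make (the $M^* \otimes_k S$ identification and the explicit check that the conjugation action is $R$-linear) are fine, routine elaborations of the same argument.
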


\begin{proof}
Since $M$ is finite dimensional, $\Hom_{k}(M,S)$ is finite over $R$. Now $\Hom_{kG}(M,S)$ is a submodule of this and, since $R$ is Noetherian, it is also finite over $R$. All the modules mentioned are quotients of this for various $M$.
\end{proof}

\section{Fixed Points}
\label{s:fix}

This section is based on the work of Feshbach \cite{Feshbach} and Fleischmann \cite{Fleischmann} and many subsequent authors, although these sources only consider polynomial rings. 

Let $k$ be a field of characteristic $p$ and let $S$ be a commutative Noetherian $k$-algebra. Let $G$ be a finite group that acts as algebra automorphisms of $S$, preserving the grading if $S$ is graded, and set $R=S^G$, also a Noetherian algebra. Then $G$ acts on $\Spec(S)$ and we define $\Spec(S)^G$ to be the set of prime ideals $J<S$ that are fixed as sets and for which the action of $G$ on $S/J$ is trivial. When $k$ is algebraically closed and we consider only maximal ideals, this is just the set of maximal ideals fixed under $G$. Let $I(G,S)$ be the ideal of $S$ generated by the elements of the form $(g-1)s$ for $g \in G$, $s \in S$. Then, by construction, $S/I(G,S)$ is the largest quotient ring of $S$ on which $G$ acts trivially and hence $\Spec ( S/I(G,S)) \cong \Spec (S)^G$. 

Given a set $\mathcal Y$ of subgroups of $G$, define $I( \mathcal Y,S)= \cap_{ H  \in  \mathcal Y} I(H,S)$ (and $I(\emptyset,S)=S$). Notice that this does not change if we close $\mathcal Y$ under supergroups.

By basic properties of $\Spec$, we obtain
\[ \Spec ( S/I(\mathcal Y,S)) \cong \cup_{ H \in \mathcal Y} \Spec (S)^H. \]

Now let $\mathcal X$ be a set of subgroups of $G$. Let $T_G(\mathcal X,S):= \sum_{H \in \mathcal X} \tr_H^G(S^H)$, an ideal in $S^G$, and let $ST_G(\mathcal X,S)$ denote the ideal it generates in $S$. These are unchanged if we close $\mathcal X$ under conjugation and subgroups. They are also unchanged if we replace each $H \in \mathcal X$ by one of its Sylow $p$-subgroups, so we will normally only consider classes of $p$-subgroups. For $H \leq G$ we write $\mathcal S_p(H)$ for the set of $p$-subgroups of $H$ and  
$T_H(\mathcal X,S)$ for $T_H(\mathcal X \cap \mathcal S_p(H),S)$.

\begin{theorem}
\label{t:inc}

Let  $\mathcal X \subseteq \mathcal S_p(G)$ be closed under subgroups and conjugation. 
\begin{enumerate}
\item
There is an inclusion of ideals in $S$, $ST_G(\mathcal X,S) \leq I(\mathcal S_p(G) \smallsetminus \mathcal X,S)$. 
\item
The associated surjection of rings $S/ST_G(\mathcal X,S) \rightarrow S/I(\mathcal S_p(G) \smallsetminus \mathcal X,S)$ has nilpotent kernel.
\item
The surjections of rings $R/T_G(\mathcal X,S) \rightarrow   R/(R \cap ST_G(\mathcal X,S)) \rightarrow R/(R \cap I(\mathcal S_p(G) \smallsetminus \mathcal X,S))$ have nilpotent kernels.
\end{enumerate}
\end{theorem}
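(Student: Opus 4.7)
My plan is to treat the three parts in order, with the first being a direct computation and the others building on it.

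\textbf{Part (1).} Fix $H \in \mathcal{X}$, $s \in S^H$, and a $p$-subgroup $P \notin \mathcal{X}$; the target is $\tr^G_H(s) \in I(P,S)$. I would decompose the left cosets $G/H$ into $P$-orbits under left multiplication, indexed by the double cosets $P \backslash G / H$: the orbit of $gH$ has $P$-stabiliser $P \cap gHg^{-1}$ and hence size $[P : P \cap gHg^{-1}]$. Since $\mathcal{X}$ is closed under conjugation and subgroups and $P \notin \mathcal{X}$, the containment $P \leq gHg^{-1}$ is impossible, so this index is a positive power of $p$. Working modulo $I(P,S)$, on which $P$ acts trivially, each $pg \cdot s \equiv g \cdot s$; summing over the orbit produces $[P : P \cap gHg^{-1}] \cdot g \cdot s \equiv 0 \pmod{I(P,S)}$, and summing over orbits yields $\tr^G_H(s) \in I(P,S)$. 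Intersecting over $P \in \mathcal{S}_p(G) \smallsetminus \mathcal{X}$ gives (1).

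\textbf{Part (2).} Because $\mathcal{S}_p(G) \smallsetminus \mathcal{X}$ is finite, by primeness the nilpotence of the kernel reduces to showing that every prime $J$ of $S$ containing $ST_G(\mathcal{X},S)$ satisfies $J \supseteq I(P,S)$ for some $P \in \mathcal{S}_p(G) \smallsetminus \mathcal{X}$, i.e.\ the inertia group $I_J$ contains a $p$-subgroup outside $\mathcal{X}$. By faithfully flat base change to $\bar{k}$ (under which $I(\mathcal{Y},-)$, $ST_G$, and the relevant subgroups are compatible), I may assume $k$ is algebraically closed, so that $I_J = G_J$. Suppose for contradiction that every $p$-subgroup of $I_J$ lies in $\mathcal{X}$, and let $P$ be a Sylow $p$-subgroup of $I_J$, so $P \in \mathcal{X}$. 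The $G$-orbit $\{J_1 = J, J_2, \ldots, J_r\}$ consists of pairwise incomparable primes, so by prime avoidance there exists $s \in \bigcap_{i \geq 2} J_i \smallsetminus J_1$; set $\tilde{s} := \prod_{p \in P} p \cdot s \in S^P$. Then $\tilde{s} \in \bigcap_{i \geq 2} J_i$ (since $s$ is a factor), while $\tilde{s} \notin J_1$ because in the domain $S/J_1$ each factor $p(s+J_1)$ is nonzero, $P \leq I_J$ acting as automorphisms of $S/J_1$. Computing $\tr^G_P(\tilde{s})$ modulo $J_1$: cosets $gP$ with $g \in I_J$ contribute $\tilde{s} + J_1$ each (inertia acts trivially on $S/J_1$), totalling $[I_J:P](\tilde{s}+J_1)$; cosets with $g \notin G_J$ satisfy $g^{-1}J_1 = J_i$ for some $i \geq 2$, forcing $g\tilde{s} \in J_1$. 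Hence
\[ \tr^G_P(\tilde{s}) \equiv [I_J:P]\,(\tilde{s}+J_1) \pmod{J_1}, \]
which is nonzero since $[I_J:P]$ is coprime to $p$ and $\tilde{s} \notin J_1$, contradicting $\tr^G_P(\tilde{s}) \in T_G(\mathcal{X},S) \subseteq J$.

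\textbf{Part (3).} The second kernel $(R \cap I)/(R \cap ST_G)$ is immediate from (2): for $r \in R \cap I$, part~(2) gives $r^n \in ST_G$ for some $n$, and since $r^n \in R$ we have $r^n \in R \cap ST_G$. For the first kernel $(R \cap ST_G)/T_G$, given $r \in R \cap ST_G$ write $r = \sum_{i=1}^n s_i \tau_i$ as a finite sum with $s_i \in S$ and $\tau_i \in T_G$. Each $s_i$ is integral over $R$ via $\prod_{g \in G}(X - g s_i) = 0$, so $M := R[s_1,\ldots,s_n] \subseteq S$ is a finite $R$-module. Multiplication by $r$ maps $M$ into $T_G \cdot M$, because $r \cdot m = \sum_i \tau_i (s_i m) \in T_G M$. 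By the Cayley--Hamilton/determinant trick, $r$ satisfies a monic equation $r^m + c_{m-1} r^{m-1} + \cdots + c_0 = 0$ in $M$ with $c_i \in T_G^{m-i}$; evaluating at $1 \in M$ and using $r \in R$ together with $T_G$ being an ideal of $R$ yields $r^m \in T_G$. The noetherian hypothesis on $R$ then upgrades each resulting nil property to genuine ideal nilpotence.

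\textbf{Main obstacle.} The crux is part~(2): the construction of $\tilde{s}$ via prime avoidance and the orbit-wise analysis of $\tr^G_P(\tilde{s}) \pmod{J_1}$ are the essential geometric input, and the reduction from general $k$ to the algebraically closed case requires careful bookkeeping of inertia versus decomposition so that the inertia terms decouple cleanly from the rest of the orbit.
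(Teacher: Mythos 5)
Your proposal follows the paper's overall structure closely, with some variations in technique; one step in part~(2) has a genuine (but repairable) gap.

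Part~(1): your decomposition of $G/H$ into $P$-orbits under left multiplication is exactly the Mackey-formula computation in the paper, just unrolled; the orbit of $gH$ has stabiliser $P\cap gHg^{-1}$, the index is a positive power of $p$ because $\mathcal X$ is closed under subgroups and conjugation and $P\notin\mathcal X$, and the orbit sum collapses modulo $I(P,S)$. This matches the paper.

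Part~(3): for the first arrow you use the Cayley--Hamilton determinant trick with $M=R[s_1,\ldots,s_n]$; the paper instead uses the norm $u^{|G|}=\prod_{g\in G}gu$, which gives a uniform exponent. Both are correct; your version produces a module-dependent exponent and you correctly invoke the noetherian hypothesis to upgrade the resulting nil ideal to a nilpotent one. For the second arrow, your observation that $r\in R\cap I$ implies $r^n\in ST_G$ hence $r^n\in R\cap ST_G$ is simpler and more direct than the paper's return to maximal ideals of $R$; it is valid.

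Part~(2): your reduction is that every prime $J\supseteq ST_G(\mathcal X,S)$ must contain $I(P,S)$ for some $P\notin\mathcal X$, and after base change to $\bar k$ you assert $I_J=G_J$. This is where the gap lies: for a general prime of a finitely generated $\bar k$-algebra the inertia group need not equal the decomposition group. For instance $J=(0)$ in $\bar k[x,y]$ with $G=\Z/2$ swapping $x$ and $y$ has $G_J=G$ but $I_J=1$. Your transfer calculation splits the cosets of $G/P$ into those from $I_J$ and those from $G\smallsetminus G_J$, silently omitting the stratum $G_J\smallsetminus I_J$; that stratum only vanishes when $I_J=G_J$. The fix --- and this is exactly what the paper does --- is to observe that by the Nullstellensatz (finitely generated $k$-algebras are Jacobson) $\sqrt{ST_G}$ is the intersection of the \emph{maximal} ideals containing $ST_G$, so one need only check maximal $J$; and for a maximal ideal over $\bar k$ the residue ring $S/J$ is $\bar k$ itself, so $G_J$ acts trivially on it and $I_J=G_J$. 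With that restriction in place, your prime-avoidance construction of $\tilde s$ and the ensuing computation $\tr^G_P(\tilde s)\equiv [I_J:P]\tilde s\pmod{J}$ are a legitimate and more hands-on alternative to the paper's reduction to the permutation algebra $\overline S\cong k[G/K]$ and its explicit idempotent bookkeeping.
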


\begin{proof}

For part (1), let $H \in \mathcal X$, $x \in S^H$ and consider $\tr_H^Gx$; we need to show that $\tr_H^Gx \in I(\mathcal S_p(G) \smallsetminus \mathcal X,S)$.  The result is trivially true when $\mathcal X = \mathcal S_p(G)$, so we may assume we have some $K \in \mathcal S_p(G) \smallsetminus \mathcal X$. By the Mackey formula, $\tr_H^Gx = \res _K^G \tr_H^Gx$ is equal to $\sum_{g \in K \backslash G/H} \tr^K_{K \cap {}^g \! H} gx$.  

We cannot have $K= K \cap {}^g \! H$, since $\mathcal X$ is closed under subgroups and conjugation and $K \not \in \mathcal X$, so $|K:K \cap {}^g \! H|gx=0$. It follows that $\tr^K_{K \cap {}^g \! H} gx=\tr^K_{K \cap {}^g \! H} gx - |K:K \cap {}^g \! H|gx = \sum_{j \in K/K \cap {}^g \! H} (j-1) gx \in I(K,S)$. This is true for any  $K \in \mathcal S_p(G) \smallsetminus \mathcal X$, which completes the proof of part (1).

For part (2), by the Nullstellensatz, it is enough to check that the quotients by the two ideals are equal modulo any maximal ideal $N$ of $S$. Our constructions are compatible with an ideal invariant under $G$, so  we let $K=\Stab_G(N)$, $M = \cap_{g \in G/K}gN$ and consider $\overline{S}=S/M$, which is isomorphic to  $\prod_{g \in G/K}S/gN$ by the Chinese Remainder Theorem. This is a product of fields, with spectrum a $G$-set isomorphic to $G/K$; let $x$ be the point $1K$. Our constructions also commute with extension of the field $k$, so we may assume that $K$ acts trivially on $S/N$. 

By the discussion on fixed-point sets above, $x \in X=\Spec(\overline{S}/ I(\mathcal S_p(G) \smallsetminus \mathcal X,\overline{S}))$ if and only if there is a subgroup $H \in S_p(G) \smallsetminus \mathcal X$ with $Hx=x$. This happens if and only if $\Syl_p(K) \not \subseteq \mathcal X$. Thus $ I(\mathcal S_p(G) \smallsetminus \mathcal X,\overline{S})$ is 0 if $\Syl_p(K) \not \subseteq \mathcal X$ and is equal to $\overline{S}$ otherwise.

In the case of $ST_G(\mathcal X,S)$, it is not immediate that the image of ${S}T_G(\mathcal X,{S})$ in $\overline{S}$ is $\overline{S}T_G(\mathcal X,\overline{S})$. First we show that $\overline{S}T_G(\mathcal X,\overline{S}) = \overline{S}I(\mathcal{S}_p(G) \smallsetminus \mathcal X,\overline{S})$. The ring $\overline{S}$ is a product of fields permuted transitively by $G$, so $\overline{S}T(\mathcal{S}_p(G) \smallsetminus \mathcal X,\overline{S})$ is either 0 or the whole of $\overline{S}$. By part (1), if $I(\mathcal S_p(G) \smallsetminus \mathcal X,\overline{S})=0$ then $\overline{S}T_G(\mathcal X,\overline{S})=0$ and we are done. 

If $I(\mathcal S_p(G) \smallsetminus \mathcal X,\overline{S}) = \overline{S}$ then $\Syl_p(K) \subseteq \mathcal X$, so let $P \in \mathcal X$ be a Sylow $p$-subgroup of $K$.
Let $e_x \in \overline{S}$ denote the idempotent corresponding to $x$.
We have $\tr^G_Pe_x = \tr^G_K\tr^K_Pe_x=\tr^G_K|K:P|e_x=|K:P| \tr^G_Ke_x.$ The elements in the sum for $\tr^G_Ke_x$ are the distinct primitive idempotents, so this sum is equal to 1. Also $|K:P| \ne 0$ in $k$, and we deduce that $\overline{S}T_G(\mathcal X,\overline{S})= \overline{S}$ in this case, hence $\overline{S}T_G(\mathcal X,\overline{S}) = \overline{S}I(\mathcal{S}_p(G) \smallsetminus \mathcal X,\overline{S})$, as claimed.

Now $e_x$ can be lifted to $\tilde{e}_x \in S$, although this $\tilde{e}_x$ might not be invariant under $K$. But $e_x = e_x^{|K|}$ is the image of $\mathcal N _K \tilde{e}_x = \prod_{k \in K} g \tilde{e}_k \in S^K$ and thus $\tr^G_K e_x$ and hence $\overline{S}$ are in the image of $T_G(\mathcal X,S)$, as required. 

For the second arrow in part (3) we again check modulo a maximal ideal $M'$ of $R$. Let $N$ be a maximal ideal of $S$ containing $SM'$; with the same notation as before, $SM' \leq \cap_{g \in G/K} gN=M$. Thus $M' \leq R \cap SM' \leq R \cap M$, but $M'$ is maximal in $R$, so $M'=R \cap M$. The natural homomorphism $R/M' \rightarrow S/M$ is injective and the result follows from part (2). 

For the first arrow, let $u=\sum_i s_i t_i \in R \cap ST_G(\mathcal X,S)$, where $s_i \in S$, $t_i \in T_G(\mathcal X,S)$. Then, since $u \in R$,  $u^{|G|} = \mathcal N_G (u)=  \prod_{g \in G} gu= \prod_{g \in G} (gs_i)t_i$ is a polynomial in the $t_i$ with coefficients that are symmetric functions in the $gs_i$, hence in $S^G=R$. Thus $u^{|G|} \in T_G(\mathcal X,S)$.
\end{proof}

\begin{example} The two main examples are the extreme ones.
\begin{enumerate}
\item 
When $\mathcal X = \{ 1 \}$, we see that $\Spec (S/I(\mathcal S_p(G) \smallsetminus \{1\}, S))$ is the $p$-singular set $\pSing _G(\Spec (S))$ of $\Spec (S)$, that is, it consists of the prime ideals fixed (in the sense mentioned above) by a non-trivial $p$-element of $G$. $\Spec (R/(R \cap I( \mathcal S_p(G) \smallsetminus \{1\}, S))$ is the image $\pSing_G(\Spec(S))/G$ of this in $\Spec(S)/G$. In this case, $R/T_G(\{ 1 \},S) \cong \hat{H}^0(G,S)$ and we obtain $\dim \hat{H}^0(G,S) = \dim \pSing_G(\Spec(S))$.

\item 
When $\mathcal X$ consists of all the $p$ subgroups that are not Sylow $p$-subgroups, we see that $\Spec (S/I( \Syl_p(G),S))$ consists of the prime ideals that are fixed by some Sylow $p$-subgroup. $\Spec (R/(R \cap I( \Syl_p(G),S)))$ is the image of this in $\Spec(S)/G$, which is homeomorphic to $\Spec(S)^P/N_G(P)$, where $P$ is a Sylow $p$-subgroup of $G$.  Note also that $R/T_G(\mathcal S_p(G) \smallsetminus \Syl_p(G),S) = S^{[G]}$; thus $\dim S^{[G]} = \dim \Spec(S)^P$.

\end{enumerate}
\end{example}

\begin{theorem}
\label{t:equiv}

Let $G$ be a finite group and set $T=T_G(\mathcal S_p(G) \smallsetminus \Syl_p(G) ,S)$ and $I=I( \Syl_p(G),S)$. There are natural homomorphisms of algebras:

\[
\xymatrix{
S^{[G]} \ar@{->>}^{\iota_1}[r] \ar@{->>}_{\iota_2}[d]
& S^{\oplus G} \ar@{->>}^{\iota_3}[d] \\
R/(R \cap ST) \ar@{->>}^{\iota_4}[r]
\ar@{^{(}->}_{\rho_1}[d]
&R/(R \cap I) \ar@{^{(}->}^{\rho_2}[d] \\
(S/ST)^G \ar@{->>}^{\iota_5}[r] 
& (S/I)^G.
}
\]
The homomorphisms labelled $\iota$ have nilpotent kernels. All six induce universal homeomorphisms on spectra.
\end{theorem}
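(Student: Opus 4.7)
The strategy is to first identify $S^{\oplus G}$ with $R/(R \cap I)$, making $\iota_3$ an isomorphism, and then construct the remaining maps and deduce their properties from Theorem~\ref{t:inc}. The key calculation is that $J_G(k,S) = R \cap I$ as subsets of $R = \Hom_{kG}(k,S)$ (identifying $f$ with $f(1) \in R$). For any $p$-subgroup $P$, since $P$ acts trivially on $k$, one has $\Hom_{kP}(S,k) = (S/I(P,S))^*$, so $f\colon k \to S$ splits as a $kP$-map iff $f(1) \notin I(P,S)$. For $P$ a Sylow $p$-subgroup, Higman's criterion ($[G:P]^{-1}\tr^G_P$ of a $kP$-splitting is a $kG$-splitting) gives that $f$ is $kG$-split iff $kP$-split. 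For $f(1) \in R$, $G$-conjugacy of Sylow subgroups makes this condition independent of the chosen $P$, so it is equivalent to $f(1) \notin \bigcap_P I(P,S) = I$. Hence $J_G(k,S) = R \cap I$, and $S^{\oplus G} = R/(R \cap I)$ with $\iota_3$ the identity.

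With $S^{[G]} = R/T$ and $S^{\oplus G} = R/(R \cap I)$, the maps $\iota_1, \iota_2, \iota_4$ are the three surjections in the chain $R/T \twoheadrightarrow R/(R \cap ST) \twoheadrightarrow R/(R \cap I)$ arising from $T \subseteq R \cap ST \subseteq R \cap I$ (the last coming from $ST \leq I$, Theorem~\ref{t:inc}(1)); their kernels are nilpotent by Theorem~\ref{t:inc}(3). The map $\iota_5$ is induced from $ST \leq I$ by applying $(-)^G$, with kernel $(I/ST)^G \subseteq I/ST$ nilpotent because $I/ST$ is (Theorem~\ref{t:inc}(2)). The injections $\rho_1, \rho_2 \colon R/(R \cap A) \hookrightarrow (S/A)^G$ for $A \in \{ST, I\}$ are the obvious ones, and commutativity of the diagram is routine.

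For the universal homeomorphism claim, each $\iota_j$ is a surjection with nilpotent kernel, so its $\Spec$ is a closed immersion cut out by a nilpotent ideal, hence a universal homeomorphism. For each $\rho_i \colon \bar R \hookrightarrow \bar S^G$ (with $\bar S = S/A$, $\bar R = R/(R \cap A)$), integrality follows from $\prod_g(X - gs) \in R[X]$ showing $S$ is integral over $R$. Radiciality is the crux: for $\bar s \in \bar S^G$ lifted to $s \in S$, reducing $\prod_g(X - gs)$ modulo $A$ yields $(X - \bar s)^{|G|} \in \bar R[X]$; writing $|G| = p^a m$ with $\gcd(p,m) = 1$, Lucas's theorem gives $\binom{|G|}{p^a} \equiv m \not\equiv 0 \pmod p$, a unit in $k$, so the coefficient of $X^{|G|-p^a}$ yields $\bar s^{p^a} \in \bar R$. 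This simultaneously provides set-theoretic injectivity of $\Spec(\bar S^G) \to \Spec(\bar R)$ (primes $\mathfrak q$ over a given $\mathfrak p \in \Spec \bar R$ are forced to contain any $x$ with $x^{p^a} \in \mathfrak p$) and purely inseparable residue extensions, so integral plus radicial gives a universal homeomorphism.

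The main obstacle is the identification $J_G(k,S) = R \cap I$, combining Higman's transfer criterion for $kG$-splitness with the coinvariants description of $kP$-splitness. Once it is in place, everything else reduces to a diagram chase together with the Lucas-based radiciality computation.
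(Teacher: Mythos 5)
The key calculation on which everything hinges --- that $J_G(k,S) = R \cap I$ --- is false, and the step where it is derived contains the error. You claim $\Hom_{kP}(S,k) = (S/I(P,S))^*$. But $\Hom_{kP}(S,k)$ is dual to the \emph{$kP$-coinvariants} $S\otimes_{kP}k = S/\langle (h-1)s : h\in P, s\in S\rangle_k$, where the denominator is the $k$-linear span (equivalently the $kP$-submodule) of the elements $(h-1)s$. By contrast $I(P,S)$ is, per the paper's definition, the \emph{ideal} they generate, which is larger in general: $s'(h-1)s$ need not lie in the $k$-span. Only one direction of your ``iff'' survives: $f$ not split over $kP$ implies $f(1)\in\langle(h-1)s\rangle_k\subseteq I(P,S)$, giving $J_G(k,S)\subseteq R\cap I$ and hence the existence and surjectivity of $\iota_3$. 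The reverse inclusion fails. For a concrete counterexample take $p=2$, $G=C_2=\langle\sigma\rangle$, and $S=k[x,y]/(x^2,y^2)$ with $\sigma$ swapping $x$ and $y$. Then $R=S^G=k\{1,x+y,xy\}$, $I=(x+y)=k\{x+y,xy\}$, so $R\cap I=k\{x+y,xy\}$; but the image of $xy$ in $S/\langle(\sigma-1)s\rangle_k=S/k\{x+y\}$ is nonzero, so $1\mapsto xy$ \emph{does} split and $J_G(k,S)=k\{x+y\}$. Here $\ker\iota_3\cong k\{\overline{xy}\}$ is nonzero (but nilpotent), so $\iota_3$ is not an isomorphism and your simplification of the diagram to a chain with $\iota_3=\mathrm{id}$ collapses.

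The statement you actually need --- nilpotence of $\ker\iota_3$ (and of $\ker\iota_1$) --- therefore requires a separate argument, which your proposal omits. The paper's route is a short diagram chase: $\iota_3\iota_1=\iota_4\iota_2$ and $\ker(\iota_4\iota_2)$ is nilpotent by Theorem~\ref{t:inc}(3); since $\iota_1$ is surjective, $\ker\iota_1\subseteq\ker(\iota_3\iota_1)$ and $\ker\iota_3=\iota_1(\ker(\iota_3\iota_1))$ are both nilpotent. The paper's proof that $J_G(k,S)\subseteq R\cap I$ is also different from yours in form (it factors $f$ through an indecomposable $N\not\cong k$ and analyses the submodule $N'$ generated by $(h-1)n$ for $p$-elements $h$), though your transfer/Higman route is a fine alternative for this inclusion once the coinvariants are used in place of the ideal quotient.

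The rest of your argument is sound and in places takes a genuinely different path from the paper. For the $\rho_i$ you reduce $\prod_{g\in G}(X-gs)$ modulo $A\in\{ST,I\}$ to get $(X-\bar s)^{|G|}\in\bar R[X]$ and extract $\bar s^{p^a}\in\bar R$ from the coefficient of $X^{|G|-p^a}$ via Lucas's theorem; the paper instead uses the transfer-norm identity $(s+I)^{|P|}=|G:P|^{-1}\tr^G_P\mathcal N_P(s)+I$. Both are correct, and your version has the small advantage of treating $\rho_1$ and $\rho_2$ uniformly rather than deducing $\rho_1$ from the rest of the square.
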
 

By definition, a morphism of schemes is a universal homeomorphism if it is a homeomorphism and remains so under any base change.  A homorphism of rings inducing a universal homeomorphism is also called an $F$-isomorphism or an inseparable isogeny in the literature. When the rings are both annihilated by the prime $p$ this is equivalent to the kernel being locally nilpotent and any element in the codomain having some $p^n$-power in the image \cite[24.46.9]{Stacks}. 

\begin{proof}
Note that $S^{[G]} = R/T$. The homomorphism $\iota_1 $ was introduced after Lemma~\ref{l:JK} and $\iota_4$ and $\iota_5$ were described in Theorem~\ref{t:inc}. The others are clear, except for $\iota_3$. For this, consider a $kG$-homomorphism $k \xrightarrow{f} S$ that factors as $k \xrightarrow{u} N \xrightarrow{v} S$, with $N$ indecomposable and not isomorphic to $k$. Let $N' \leq N$ be the submodule generated by elements of the form $(h-1)n$, where $h\in G$ is a $p$-element and $n \in N$. Suppose that $u(k) \not \leq N'$; then $u(k)$ has non-zero image in $N/N'$. But $N/N'$ is the largest quotient of $N$ on which every $p$-element of $G$ acts trivially, so the action of $G$ factors through a $p'$-group and $N/N'$ must be semisimple. Thus $u(k)$ is a summand and it follows that $u(k)$ is also a summand of $N$, a contradiction. Thus $u(k) \leq N'$ and hence $f(k) \subseteq I$.  

The nilpotence of the kernels of $\iota_2$, $\iota_4$ and $\iota_5$ was proved in Theorem~\ref{t:inc}.  The result for $\iota_1$ and $\iota_3$ now follows automatically.  

In order to show that $\rho_2$ is a universal homeomorphism, let $P$ be a Sylow $p$-subgroup of $G$ and consider $s + I \in (S/I)^G$. The action of $G$ on $S/I$ is trivial by definition, so $(s+I)^{|P|} = \mathcal N_P (s+I)=|G:P|^{-1} \tr_P^G \mathcal N_P(s+I) = |G:P|^{-1} \tr_P^G \mathcal N_P(s)+I = \rho_2 (|G:P|^{-1} \tr_P^G \mathcal N_P(s)+ R \cap I)$, since $\tr^G_P \mathcal N_P(s) \in R$. 

The result for $\rho_1$ follows immediately, since the other homomorphisms in the bottom square induce universal homeomorphisms. For a more ring-theoretic proof, the result for $\rho_2$ implies that for any $s +ST\in (S/ST)^G$ there is an $r \in R$ such that $s+ST - \rho_1(r+R \cap ST)$ is in the kernel of $\iota_5$. Thus, for some large $p^m$, we have $0 = (s+ST - \rho_1(r+R \cap ST))^{p^m}=(s+ST)^{p^m} - \rho_1((r+R \cap ST)^{p^m})$.
\end{proof}

\section{Functors}
\label{s:functor}

Let $F$ be a $k$-linear functor from $kG$-modules to $k$-vector spaces that commutes with infinite direct sums. We have in mind $F(X)=\Hom_{kG}(M,X)$, for some finite dimensional $kG$-module $M$, or some functor related to it as in Section~\ref{s:notation}.

We can extend this to graded modules in the usual way. If we consider $RG$-modules $X$, where $R$ is a $k$-algebra with trivial $G$-action, then $F(X)$ is naturally an $R$-module: if $m_r$ represents multiplication by $r \in R$ on $X$ then $F(m_r)$ represents the action of $r$ on $F(X)$.

Since $F$ commutes with direct sums and the action of $G$ on $R$ is trivial we have $F(R \otimes_k M) \cong R \otimes_k F(M)$ as $R$-modules. 

When we wish to record the group $G$ involved in the definition of the functor $F$ we write $F_G$. In many cases $F_?$ is naturally a cohomological Mackey functor; thus $F_G$ is naturally a $k$-summand of $F_P$, where $P$ is a Sylow $p$-subgroup of $G$ (the inverse to the restriction $F_P \rightarrow F_G$ is $|G:P|^{-1}\tr^G_P$). If we evaluate on $RG$-modules it is even an $R$-summand. Note that, in general, if an $R$-module $X$ is a summand of $Y$, then $\dim X \leq \dim Y$ and $\depth X \geq \depth Y$. Therefore $\dim F_G(X) \leq \dim F_P(X)$ and $\depth F_G(X) \geq \depth F_P(X)$ (provided $F_G(X) \ne 0$).

\begin{remark} \hspace{-1pt}
\begin{enumerate} 
\item
There does not seem to be a trouble-free definition for the depth of the zero module. A value of $+\infty$ would be consistent with most of our results, but we will treat this case separately in our statements.
\item
We should really record the ring and write $\depth_R$, but this should always be clear from the context.
\item
At this point we do not know that $F(X)$ is finite over $R$, so the depth should be defined in terms of vanishing of local cohomology rather than regular sequences (cf.\ \cite[3.5]{Totaro}).
\end{enumerate}
 
\end{remark}

In a similar vein we have the following result.

\begin{proposition}
\label{prop:vertex}
Let $M$ be an indecomposable $kG$-module and with vertex $P$ and source $W$. Then $ \dim \Hom_{kG}^{\mathcal X}(M,S) \leq \dim \Hom_{kP}^{\mathcal X}(W,S)$ and $ \depth \Hom_{kG}^{\mathcal X}(M,S) \geq \depth \Hom_{kP}^{\mathcal X}(W,S)$ (assuming $\Hom_{kG}^{\mathcal X}(M,S) \ne 0$).
\end{proposition}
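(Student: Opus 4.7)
The plan is to realise $\Hom_{kG}^{\mathcal X}(M,S)$ as an $R$-module summand of $\Hom_{kP}^{\mathcal X}(W,S)$; once this is in hand, the dimension and depth inequalities follow from the general principle recalled immediately before the proposition (dimension is non-increasing, depth is non-decreasing, on passing to an $R$-summand), together with the standing hypothesis $\Hom_{kG}^{\mathcal X}(M,S)\ne 0$ which ensures that depth is defined.

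The first step is essentially formal. Since $P$ is a vertex and $W$ a source of $M$, $M$ is a $kG$-summand of $\Ind^G_P W$. The construction $\Hom_{kG}^{\mathcal X}(-,S)$ is contravariantly functorial in its first argument because, for any $kG$-homomorphism $\phi:N\to N'$, precomposition with $\phi$ commutes with each transfer $\tr^G_Q$ and so sends the $\mathcal X$-transfer ideal into itself. Hence $\Hom_{kG}^{\mathcal X}(M,S)$ is an $R$-summand of $\Hom_{kG}^{\mathcal X}(\Ind^G_P W,S)$.

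The second step, the heart of the argument, is to identify
\[
\Hom_{kG}^{\mathcal X}(\Ind^G_P W,\,S) \;\cong\; \Hom_{kP}^{\mathcal X}(W,\,S)
\]
as $R$-modules by passing the Frobenius reciprocity isomorphism $\epsilon(\phi)=\phi(1\otimes -)$ to the $\mathcal X$-transfer quotients. The forward compatibility is a Mackey-formula computation: for $Q\in\mathcal X$ and $h\in\Hom_{kQ}(\Ind^G_P W,S)$, after decomposing $G/Q$ via the double cosets $P\backslash G/Q$, one checks
\[
\epsilon(\tr^G_Q h) \;=\; \sum_{x\in P\backslash G/Q} \tr^P_{P\cap xQx^{-1}}\, h_x,
\qquad h_x(w) = x\cdot h(x^{-1}\otimes w),
\]
which lies in the $\mathcal X$-transfer ideal on the right, since $\mathcal X$ is closed under conjugation and taking subgroups (as we may assume, per the convention in Section~\ref{s:notation}). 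For the reverse compatibility, given $Q'\leq P$ in $\mathcal X$ and $g\in\Hom_{kQ'}(W,S)$, the Mackey decomposition of $\Res^G_P\Ind^G_P W$ exhibits $W$ as the $kP$-summand indexed by the trivial double coset, and the resulting $kP$-retraction $\pi:\Res^G_P\Ind^G_P W\twoheadrightarrow W$ produces a $kQ'$-map $\tilde g = g\circ\pi:\Ind^G_P W\to S$ satisfying $\epsilon(\tr^G_{Q'}\tilde g) = \tr^P_{Q'} g$.

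Combining the two steps exhibits $\Hom_{kG}^{\mathcal X}(M,S)$ as an $R$-summand of $\Hom_{kP}^{\mathcal X}(W,S)$, and the claimed inequalities follow. The only real obstacle is the Mackey-formula bookkeeping in the second step; everything else is categorical generality.
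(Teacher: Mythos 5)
Your proposal is correct and takes essentially the same route as the paper: exhibit $\Hom_{kG}^{\mathcal X}(M,S)$ as an $R$-module summand of $\Hom_{kG}^{\mathcal X}(W\!\uparrow_P^G,S)$, identify the latter with $\Hom_{kP}^{\mathcal X}(W,S)$ via Frobenius reciprocity, and invoke the dimension/depth principle for summands stated just before the proposition. The paper states the isomorphism without comment; your contribution is to spell out the Mackey-formula verification that the adjunction isomorphism carries each transfer ideal onto the other, which is correct and fills in the detail the paper leaves implicit.
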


\begin{proof}
We know that $\Hom_{kG}^{\mathcal X}(M,S)$ is a summand of $\Hom_{kG}^{\mathcal X}(W \! \uparrow _P^G,S)$, which in turn is isomorphic to $\Hom_{kP}^{\mathcal X}(W,S)$, all as $S^G$-modules. The result follows.
\end{proof}

\begin{corollary}
\label{cor:bound}
Let $M$ be an indecomposable $kG$-module with vertex $P$ and let $\mathcal X$ be a class of $p$-subgroups of $G$, closed under conjugation and subgroups, that does not contain $P$. We have
\begin{enumerate}
\item $\dim \Hom^{\mathcal X}_{kG}(M,S) \leq \dim  \Hom^{\mathcal X}_{kP}(k,S) \leq \max_{Q \in \mathcal S_p(P) \setminus \mathcal X} \dim \Spec(S)^Q$ and
\item
$ \dim \Hom_{kG}^{\oplus} (M,S)  \leq \dim \Spec(S)^{G_p}$.
\end{enumerate}
\end{corollary}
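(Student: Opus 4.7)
The plan is to handle parts (1) and (2) separately, in each case realising the left-hand side as a finitely generated module over a ring whose spectrum has already been identified in the paper.

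For part (1), the strategy is to chain three inequalities. First, apply Proposition~\ref{prop:vertex} with a source $W$ of $M$ at $P$ to obtain $\dim \Hom^{\mathcal X}_{kG}(M,S) \leq \dim \Hom^{\mathcal X}_{kP}(W,S)$. Second, use the fact from the discussion after Lemma~\ref{l:summands} that $\Hom^{\mathcal X}_{kP}(W,S)$ is a module over the ring $\Hom^{\mathcal X}_{kP}(k,S) = S^P/T_P(\mathcal X,S)$. The one thing requiring checking is that the $S^P$-action on $\Hom_{kP}(W,S)$ descends through the transfer ideal: if $r' \in S^H$ with $H \in \mathcal X$ and $\phi \in \Hom_{kP}(W,S)$, then unwinding the transfer and using $kH$-linearity of $\phi$ yields $\tr^P_H(r') \cdot \phi = \tr^P_H(r' \phi)$, which lies in $\tr^P_H \Hom_{kH}(W,S)$ and hence vanishes in $\Hom^{\mathcal X}_{kP}(W,S)$. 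Finite generation over $S^P$, and hence over $\Hom^{\mathcal X}_{kP}(k,S)$, comes from Lemma~\ref{l:Rmod}. This gives $\dim \Hom^{\mathcal X}_{kP}(W,S) \leq \dim \Hom^{\mathcal X}_{kP}(k,S)$. Third, Theorem~\ref{t:inc}(3) provides a surjection with nilpotent kernel from $\Hom^{\mathcal X}_{kP}(k,S)$ to $S^P/(S^P \cap I(\mathcal S_p(P) \setminus \mathcal X, S))$, and the Nullstellensatz identification $\Spec(S/I(\mathcal S_p(P) \setminus \mathcal X, S)) \cong \bigcup_{Q \in \mathcal S_p(P) \setminus \mathcal X} \Spec(S)^Q$, together with dimension-preservation under the finite quotient by $P$, yields the desired maximum.

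For part (2), the argument is a direct application of Theorem~\ref{t:equiv}. By the discussion after Lemma~\ref{l:summands}, $\Hom^{\oplus}_{kG}(M,S)$ is a module over $S^{\oplus G}$, and Lemma~\ref{l:Rmod} shows it is finitely generated, so $\dim \Hom^{\oplus}_{kG}(M,S) \leq \dim S^{\oplus G}$. Theorem~\ref{t:equiv} then identifies $\Spec S^{\oplus G}$ with $\Spec(S)^{G_p}/N_G(G_p)$, which has the same dimension as $\Spec(S)^{G_p}$.

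The main obstacle is the middle step of part (1): confirming that the transfer ideal $T_P(\mathcal X,S) \subseteq S^P$ annihilates $\Hom^{\mathcal X}_{kP}(W,S)$, so that the module structure over $\Hom^{\mathcal X}_{kP}(k,S)$ is well defined. This reduces to the one transfer identity highlighted above; once it is in place, the remaining inequalities are immediate invocations of the results of Sections~\ref{s:notation} and~\ref{s:fix}.
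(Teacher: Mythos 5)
Your argument for part~(1) is essentially the one in the paper: reduce to the source over the vertex by Proposition~\ref{prop:vertex}, observe that $\Hom^{\mathcal X}_{kP}(W,S)$ is a finite module over $\Hom^{\mathcal X}_{kP}(k,S)$, and then invoke Theorem~\ref{t:inc}(3) together with the fixed-point description of spectra. Your explicit verification of the transfer identity $\tr^P_H(r')\cdot\phi = \tr^P_H(r'\phi)$ is exactly the detail the paper leaves to ``the discussion after Lemma~\ref{l:JK}'', so part~(1) is fine.

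Part~(2) has a genuine gap. You argue that $\Hom^{\oplus}_{kG}(M,S)$ is a finite module over $S^{\oplus G}$, whence $\dim \Hom^{\oplus}_{kG}(M,S) \leq \dim S^{\oplus G} = \dim\Spec(S)^{G_p}$. The problem is that the $R$-action on $\Hom^{\oplus}_{kG}(M,S)$ does \emph{not} factor through $S^{\oplus G}$ in general, even though the paper's text (after Lemma~\ref{l:JK}) appears to assert this. The factoring would require: if $f\colon k\to S$ lies in $J_G(k,S)$, then $f\cdot g \in J_G(M,S)$ for every $g\colon M \to S$. But if $f$ factors through a nontrivial indecomposable $X$, then $f\cdot g$ merely factors through $X\otimes M$, which can contain $M$ as a summand. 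Concretely, take $p=2$, $G=C_2$, $V=kG$ the regular representation and $S=k[V]=k[x,y]$ with $\sigma$ swapping $x$ and $y$. The element $a=x+y\in S^G$, viewed as $k\to S$, is not split (since $S_1\cong kG$ is indecomposable), so $\bar a = 0$ in $S^{\oplus G}$. Yet for the split embedding $g\colon kG\xrightarrow{\sim} S_1\hookrightarrow S$ the map $a\cdot g\colon kG\to S_2$ has image $\mathrm{span}(x^2+xy,\,y^2+xy)$, which is a $kG$-summand of $S_2$ (complemented by $k\cdot xy$), so $a\cdot g$ is again split and $\overline{a\cdot g}\ne 0$. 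Thus $a$ does not annihilate $\Hom^{\oplus}_{kG}(kG,S)$.

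Independently of the construction, the bound you derive is false as stated: Proposition~\ref{p:proj} shows that for a projective indecomposable $E$ (vertex $P=1$) with $\ker(V)\le\ker(E)$, $\dim\Hom^{\oplus}_{kG}(E,k[V]) = \dim_k V$, which exceeds $\dim_k V^{G_p}$ whenever $G_p$ acts nontrivially. The bound in part~(2) should be $\dim\Spec(S)^P$ with $P$ the vertex of $M$, and this is what the paper actually proves: it sets $\mathcal X$ to be the subgroups $G$-conjugate to a proper subgroup of $P$, so $\mathcal S_p(P)\smallsetminus\mathcal X = \{P\}$, notes that $\Hom^{\oplus}_{kG}(M,S)$ is a quotient of $\Hom^{\mathcal X}_{kG}(M,S)$, and applies part~(1). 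That is the route you should take; it both avoids the problematic $S^{\oplus G}$-module structure and yields the bound $\dim_k V^P$ needed for Theorem~\ref{th:sum}.
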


\begin{proof}
For the left hand inequality in part (1), Proposition~\ref{prop:vertex} allows us to replace $\Hom^{\mathcal X}_{kG}(M,S)$ by $\Hom_{kP}^{\mathcal X}(W,S)$, where $W$ is a source of $M$. This is finite over $\Hom^{\mathcal X}_{kP}(k,S)$ by the discussion after Lemma~\ref{l:JK}.  The right hand inequality follows from the discussion on fixed points of spectra. 

For part (2), take $\mathcal X$ to be the set of all subgroups of $G$ that are conjugate to a proper subgroup of $P$. Then $\Hom_{kG}^{\oplus} (M,S)$ is a quotient of $\Hom^{\mathcal X}_{kG}(M,S)$ and we can apply part (1).
\end{proof}

Our functors also commute with filtered colimits.

\begin{lemma}
\label{l:colim}
If $M$ is finite dimensional then $\Hom_{kG}^{\mathcal X}(M,-)$,  $\Hom_{kG}^{\oplus}(M,-)$, $(-)^{[G]}$ and $(-)^{\oplus G}$ commute with filtered colimits.
\end{lemma}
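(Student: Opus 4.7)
The plan is to observe first that $\Hom_{kG}(M,-)$ itself commutes with filtered colimits because $M$ is finitely presented over the finite-dimensional algebra $kG$, and then to write each of the remaining three functors as a quotient of $\Hom_{kG}(M,-)$ (or $\Hom_{kG}(k,-)$) by a subfunctor which itself commutes with filtered colimits. Since filtered colimits of vector spaces are exact, they commute with sums and with quotients, so this will suffice in every case.

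For $\Hom_{kG}^{\mathcal X}(M,-)$ the relevant subfunctor is $\sum_{P\in\mathcal X}\tr^G_P\Hom_{kP}(M,-)$. Each $\Hom_{kP}(M,-)$ commutes with filtered colimits by the same finitely presented argument applied to $M$ viewed as a $kP$-module, and $\tr^G_P$ is a natural transformation; the image subfunctor therefore commutes with filtered colimits, and so does a sum over $\mathcal X$ of such images. This also disposes of $(-)^{[G]}$, which is the special case $M=k$ with $\mathcal X$ the $p$-subgroups of $G$ of index divisible by $p$.

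For $\Hom_{kG}^{\oplus}(M,-)$ I would handle the denominator $J_G(M,-)$ via Lemma~\ref{l:JK}, which identifies it with $K_G(M,-)$, the $k$-span of homomorphisms factoring through some finite-dimensional indecomposable $kG$-module not isomorphic to $M$. Given $f\in K_G(M,\colim_\alpha S_\alpha)$, write $f=vu$ with $u\colon M\to N$ and $v\colon N\to\colim_\alpha S_\alpha$ as in the definition; since $N$ is finite-dimensional, hence finitely presented over $kG$, the map $v$ factors through $S_\alpha$ for some index $\alpha$, so $f$ lies in the image of the natural map $K_G(M,S_\alpha)\to K_G(M,\colim_\alpha S_\alpha)$. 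The reverse inclusion is immediate from functoriality, so $K_G(M,-)$ commutes with filtered colimits, and therefore so does the quotient $\Hom_{kG}^{\oplus}(M,-)$; the case $(-)^{\oplus G}$ is then $M=k$.

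The main obstacle is $\Hom_{kG}^{\oplus}$: the defining condition ``not split injective'' is not obviously stable under the transition maps of a filtered diagram, so attacking $J_G(M,-)$ head-on would be awkward. The point of routing through Lemma~\ref{l:JK} is precisely that ``factors through a fixed finite-dimensional $N$'' is a positive, finitely-generated condition that immediately descends to a finite stage, which is exactly what is needed to identify $K_G(M,\colim_\alpha S_\alpha)$ with $\colim_\alpha K_G(M,S_\alpha)$.
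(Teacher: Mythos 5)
Your proof is correct and takes essentially the same route as the paper: write each functor as the quotient of $\Hom_{kG}(M,-)$ by a subfunctor (the transfer-image subfunctor for $\Hom^{\mathcal X}_{kG}$, and $J_G=K_G$ via Lemma~\ref{l:JK} for $\Hom^{\oplus}_{kG}$), reduce to the subfunctor using exactness of filtered colimits, and use finite presentation of $M$ and of the intermediary $N$ to descend to a finite stage. The only cosmetic difference is that you argue directly that the subfunctor commutes with filtered colimits, while the paper phrases it as a surjectivity statement after a diagram chase; the substance is identical.
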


\begin{proof} This is true for $\Hom_{kG}(M,-)$, because a finite dimensional $kG$-module is compact. For the other functors $F$ in the list above, we have a short exact sequence $I(-) \rightarrow \Hom_{kG}(M,-) \rightarrow F(-)$. A standard diagram chase reduces the problem to showing that the natural map $\colim I(X_{\lambda}) \rightarrow I(\colim X_{\lambda})$ is surjective. 

For $\Hom_{kG}^{\mathcal X}(M,-)$ this follows from elementary properties of the transfer. For $\Hom_{kG}^{\oplus}(M,-)$ we use the description in terms of $K_G(M,-)$ from Lemma~\ref{l:JK}. If $M \xrightarrow{f} \colim X_{\lambda}$ is in $K_G(M,\colim X_{\lambda})$ then it factors through some finite dimensional indecomposable module $N$ that is not isomorphic to $M$. The map $N \xrightarrow{f} \colim X_{\lambda}$ will factor through some $X_{\lambda}$, so $f$ is in the image of $K_G(M,X_{\lambda})$.
\end{proof}

If $F$ commutes with  filtered colimits then it commutes with localisation: $F(M \otimes _R R[\tfrac{1}{r}]) \cong F(M) \otimes _R R[\tfrac{1}{r}]$ for $r \in R$. 

A basic tool here is the \v{C}ech complex, strictly speaking the  ``extended \v{C}ech complex'' or the ``stable Koszul complex''.

Given a ring $R$, a sequence of homogeneous elements $\mathbf{x}= x_1, \ldots , x_r$ from $R$ and an $R$-module $M$, the \v{C}ech complex $\check{C} (\mathbf{x};M)$ is a cochain complex
\[
M \rightarrow \bigoplus _i M_{x_i} \rightarrow \bigoplus_{i<j}M_{x_ix_j} \rightarrow \cdots \rightarrow M_{x_1 \cdots x_r},
\]
where $M_x$ denotes the localization obtained by inverting $x$. It can be obtained as follows. $\check{C} (x_i;R)$ is the complex
\[
R \rightarrow R_{x_i},
\]
with $R$ in degree 0 and $R_{x_i}$ in degree 1
and
\[
\check{C} (\mathbf{x};M)= \left( \bigotimes_{i=1}^r \! {}_R \; \check{C} (x_i;R) \right) \otimes_R M.
\]

Observe that if $G$ is a finite group and $M$ is an $RG$-module then $\check{C} (\mathbf{x};M)$ is a complex of graded $RG$-modules; up to isomorphism it is independent of the ordering of the elements of $\mathbf{x}$.

The $j$th homology group of this complex is equal to the local cohomology group $H^j_{(\mathbf{x})}(M)$, where $(\mathbf{x})$ denotes the ideal in $R$ generated by $\mathbf{x}$. It is known that if $\mathbf{y}$ is another sequence such that $\rad (\mathbf{y})= \rad (\mathbf{x})$ then $H^*_{(\mathbf{y})}(M)=H^*_{(\mathbf{x})}(M)$. We are particularly interested in the case when $\rad (\mathbf{x})= \mathfrak m = \mathfrak m_R := \rad(R_{>0})$, the radical of the ideal of elements in positive degrees (but this ideal is only maximal if $R_0/ \! \rad(R_0)$ is a field). For more information, see \cite[3.5]{BH}, \cite[3.1]{Totaro}.
 
For the functors $F$ above we have $\check{C} (\mathbf{x};F(X)) \cong F(\check{C} (\mathbf{x};X))$.

For a polynomial ring $k[V]$ we showed in \cite[4.2]{Sy-Cech} that the complex $\check{C} (\mathbf{x};k[V])$ is split as a complex of $kG$-modules in degrees greater than or equal to $-\dim_k V$. This has the following immediate consequence for the regularity or its variant $\hreg$, as in \cite[\S3]{Sy-Cech}.

\begin{proposition}
For any $k$-linear functor $F$ from $kG$-modules to $k$ vector spaces that commutes with filtered colimits, any finite dimensional $kG$-module $V$ (given grading $-1$, so $V^*$ has grading 1) and any subring $R \leq k[V]^G$ such that $k[V]$ is finite over $R$ we have $\hreg_RF(k[V]) \leq - \dim_k V$ and $\reg_R F(k[V]) \leq 0$.
\end{proposition}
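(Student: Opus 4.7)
The plan is to reduce the bound to the splitting of the \v{C}ech complex proved in \cite[4.2]{Sy-Cech}. Because $k[V]$ is finite over $R$ we have $\dim R = \dim V =: n$, and I would begin by choosing a homogeneous system of parameters $\mathbf{x} = x_1, \ldots, x_n$ for $R$, so that $\sqrt{(\mathbf{x})} = \mathfrak{m}_R$ and the \v{C}ech complex $\check{C}(\mathbf{x}; -)$ computes $H^*_{\mathfrak{m}_R}(-)$ while automatically forcing $H^i_{\mathfrak{m}_R}(-) = 0$ for $i > n$.

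Next I would invoke the additivity of $F$ together with its commutation with filtered colimits, hence with localization at each $x_i$, to obtain the identification $F(\check{C}(\mathbf{x}; k[V])) \cong \check{C}(\mathbf{x}; F(k[V]))$ recorded in the discussion preceding the statement. The key input \cite[4.2]{Sy-Cech} then asserts that, in each internal degree $j \geq -n$, the complex of $kG$-modules $\check{C}(\mathbf{x}; k[V])_j$ is split, i.e.\ $kG$-chain-homotopy-equivalent to its cohomology placed in the appropriate cohomological degrees with zero differentials. Because an additive functor carries chain-homotopy equivalences to chain-homotopy equivalences, applying $F$ and combining with the identification above yields
\[
H^i_{\mathfrak{m}_R}(F(k[V]))_j \;\cong\; F\bigl(H^i_{\mathfrak{m}_R}(k[V])_j\bigr)
\]
for all $j \geq -n$ and all $i$. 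For $k[V]$ a polynomial ring on generators in degree $1$, the right-hand side vanishes whenever $j > -n$, since $H^*_{\mathfrak{m}_R}(k[V])$ is concentrated in cohomological degree $n$ and in internal degrees $\leq -n$; this is precisely $\hreg_R F(k[V]) \leq -\dim_k V$.

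The regularity bound then follows formally: the \v{C}ech complex of length $n$ gives $H^i_{\mathfrak{m}_R}(F(k[V])) = 0$ for $i > n$, while for $i \leq n$ the established vanishing forces $H^i_{\mathfrak{m}_R}(F(k[V]))_j = 0$ for $j > -n$; any nonvanishing $(i,j)$ therefore satisfies $i + j \leq 0$, i.e.\ $\reg_R F(k[V]) \leq 0$.

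The step that needs genuine care is the chain-homotopy one: I must use that \cite[4.2]{Sy-Cech} provides an actual $kG$-equivariant splitting, not merely exactness, because $F$ is not assumed exact. Once that is in hand, the identification $F \circ \check{C} \cong \check{C} \circ F$ supplied by the filtered-colimit hypothesis transports the splitting through $F$ and everything else is formal.
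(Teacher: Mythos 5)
Your argument is correct and is exactly the ``immediate consequence'' the paper has in mind: the identification $F(\check{C}(\mathbf{x};k[V]))\cong\check{C}(\mathbf{x};F(k[V]))$ via commutation with filtered colimits, the $kG$-splitting of $\check{C}(\mathbf{x};k[V])$ in internal degrees $\geq -\dim_k V$ from \cite[4.2]{Sy-Cech}, and the observation that a $k$-linear functor sends split complexes to split complexes, so $H^i_{\mathfrak m_R}(F(k[V]))_j\cong F(H^i_{\mathfrak m_R}(k[V])_j)=0$ for $j>-\dim_k V$, which combined with $H^i=0$ for $i>n$ gives both bounds. The paper leaves this unspoken after citing \cite{Sy-Cech}, so you have simply filled in the same reasoning.
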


The regularity is particularly useful for finding bounds on the degrees of the generators and relations in a minimal presentation of the $R$-module $F(k[V])$ (or a presentation as a $k$-algebra if it is a $k$-algebra), at least if $F(k[V])$ is finite over $R$, see \cite{Sy-reg}. In particular, $F(k[V])$ is completely determined if it is computed up to some known degree.

We also showed in \cite{Sy-Cech} that only a finite number of non-isomorphic indecomposable summands occur in $k[V]$ and that the localisations in the \v{C}ech complex are all direct sums of these.

\begin{lemma}
\label{l:structure}
If $S$ is a direct sum of finite dimensional $kG$-modules, for example $k[V]$, let $\{ M_\lambda \}_{\lambda \in \Lambda }$ be the set of isomorphism types of indecomposable summands. Then evaluation of homomorphisms yields a split surjection of $kG$-modules 
\[
\oplus_{\lambda \in \Lambda } \Hom_{kG}(M_\lambda,S) \otimes _{E_M} M_\lambda \rightarrow S.
\]
\end{lemma}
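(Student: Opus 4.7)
The plan is to write down an explicit splitting after choosing an honest direct-sum decomposition of $S$ into indecomposables. Since $S$ is a sum of finite dimensional $kG$-modules, Krull--Schmidt applied inside each finite dimensional summand gives a direct sum decomposition
\[
S = \bigoplus_{\lambda \in \Lambda}\bigoplus_{i \in I_{\lambda}} M_{\lambda,i},
\]
where each $M_{\lambda,i}$ is equipped with a fixed $kG$-isomorphism $\iota_{\lambda,i}\colon M_{\lambda} \xrightarrow{\sim} M_{\lambda,i}\hookrightarrow S$. In particular each $\iota_{\lambda,i}$ is an element of $\Hom_{kG}(M_{\lambda},S)$.

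Next I would define
\[
\sigma\colon S \longrightarrow \bigoplus_{\lambda \in \Lambda}\Hom_{kG}(M_{\lambda},S)\otimes_{E_{M_{\lambda}}} M_{\lambda}
\]
as follows. Any $s\in S$ has a unique expression $s=\sum_{\lambda,i} s_{\lambda,i}$ with $s_{\lambda,i}\in M_{\lambda,i}$ and only finitely many terms non-zero, and I set
\[
\sigma(s) = \sum_{\lambda,i}\iota_{\lambda,i}\otimes \iota_{\lambda,i}^{-1}(s_{\lambda,i}).
\]
This is a finite sum, so is well-defined, and it is $kG$-linear because each $\iota_{\lambda,i}$ is $kG$-equivariant: the $G$-action on the codomain is only on the tensor factor $M_{\lambda}$, and $\iota_{\lambda,i}^{-1}(g\cdot s_{\lambda,i})= g\cdot\iota_{\lambda,i}^{-1}(s_{\lambda,i})$.

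Finally, composing with the evaluation map yields
\[
\mathrm{ev}(\sigma(s)) = \sum_{\lambda,i}\iota_{\lambda,i}\bigl(\iota_{\lambda,i}^{-1}(s_{\lambda,i})\bigr) = \sum_{\lambda,i} s_{\lambda,i} = s,
\]
so $\mathrm{ev}\circ\sigma=\mathrm{id}_S$, exhibiting $\mathrm{ev}$ as a $kG$-split surjection. There is no serious obstacle here; the only mild subtlety is that the Krull--Schmidt decomposition of $S$ is not canonical (in particular the isomorphism classes and multiplicities are, but the particular $\iota_{\lambda,i}$ are not), so the constructed splitting $\sigma$ depends on these choices. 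Balancing over the division algebra $E_{M_{\lambda}}$ causes no issue, since $\sigma$ is defined via specific representatives and evaluation is $E_{M_{\lambda}}$-balanced by construction.
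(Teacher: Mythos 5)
Your proof is correct and is essentially the explicit version of the paper's one-line argument: the paper just says the map is clearly an isomorphism for indecomposable $S$ and that both sides commute with direct sums, whereas you choose a Krull--Schmidt decomposition $S=\bigoplus_{\lambda,i}M_{\lambda,i}$ and write down the splitting $\sigma(s)=\sum_{\lambda,i}\iota_{\lambda,i}\otimes\iota_{\lambda,i}^{-1}(s_{\lambda,i})$ directly. Writing it out this way is arguably cleaner: the paper's phrase ``both sides commute with direct sums'' needs to be read with a little care, since the indexing set $\Lambda$ itself depends on $S$, and your construction sidesteps that entirely. Your observations --- that $G$ acts trivially on the $\Hom_{kG}$ factor so $kG$-linearity of $\sigma$ only involves the $M_{\lambda}$ tensor factor, and that evaluation is $E_{M_{\lambda}}$-balanced while $\sigma$ need not be --- are both correct and worth having on record.
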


\begin{proof}
This is clearly true when $S$ is indecomposable and both sides commute with direct sums.
\end{proof}

\begin{proposition}
\label{p:finite}
Suppose that $S$ is an $RG$-module that is finite as an $R$-module and as a $kG$-module is a direct sum of finite dimensional modules of finitely many isomorphism types. Let $F$ be a $k$-linear functor that commutes with direct sums and takes finite dimensional $kG$-modules to finite dimensional $k$ vector spaces; then $F(S)$ is finite over $R$.
\end{proposition}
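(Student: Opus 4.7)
The plan is to exploit the structure-theorem decomposition of $S$ as a $kG$-module described after Lemma~\ref{l:JK}, which is available here precisely because the number of isomorphism classes of indecomposable summands is finite. Fix a graded Noether normalisation $k[d_1,\ldots,d_n] \subseteq R$; the structure theorem then supplies a $kG$-module isomorphism
\[
S \;\cong\; \bigoplus_{\lambda \in \Lambda}\,\bigoplus_{j \in J_\lambda}\, k[d_i]_{i\in I_{j,\lambda}} \otimes_k M_\lambda,
\]
with $\Lambda$ and each $J_\lambda$ finite, each $I_{j,\lambda} \subseteq \{1,\ldots,n\}$, and the $(\lambda,j)$-summand realised inside $S$ as the image of the multiplication map $p \otimes m \mapsto p \cdot f_j^{(\lambda)}(m)$ for a chosen $f_j^{(\lambda)} \in \Hom_{kG}(M_\lambda,S)$. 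In particular this isomorphism is $k[d_i]_{i\in I_{j,\lambda}}$-linear on each summand.

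Next I apply $F$. Each factor $k[d_i]_{i\in I_{j,\lambda}} \otimes_k M_\lambda$ is, as a $kG$-module, a direct sum over monomials of copies of $M_\lambda$, so the hypothesis that $F$ commutes with direct sums yields the $k$-vector space decomposition
\[
F(S) \;\cong\; \bigoplus_{(\lambda,j)}\, k[d_i]_{i \in I_{j,\lambda}} \otimes_k F(M_\lambda).
\]
By naturality of $F$ applied to the $kG$-endomorphism of $S$ given by multiplication by $d_i$, the action of $d_i$ with $i \in I_{j,\lambda}$ on the $(\lambda,j)$-summand of $F(S)$ is ordinary polynomial multiplication on the first tensor factor; multiplication by $d_i$ with $i \notin I_{j,\lambda}$ may shuffle this summand into others, but I shall not need to control it.

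Finally, choose a $k$-basis $B_\lambda$ of each finite-dimensional $F(M_\lambda)$. The finite set $\{\,1 \otimes v : \lambda \in \Lambda,\; j \in J_\lambda,\; v \in B_\lambda\,\}$ generates $F(S)$ as an $R$-module, because every element of the $(\lambda,j)$-summand is a $k$-combination of monomials $d^{\mathbf{a}} \otimes v$ with $\mathbf{a}$ supported in $I_{j,\lambda}$, and such a monomial equals $d^{\mathbf{a}} \cdot (1 \otimes v)$ under the $R$-action since $k[d_i]_{i \in I_{j,\lambda}} \subseteq R$. Hence $F(S)$ is finitely generated over $R$. The main technical point is the compatibility asserted in the second paragraph between the $R$-action on $F(S)$ induced by $F$ and the naive polynomial action on each summand for $d_i$ with $i \in I_{j,\lambda}$; this comes from naturality of $F$ together with the fact that the structure-theorem isomorphism is literally given by multiplication maps.
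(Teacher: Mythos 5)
Your proof is correct, but it runs on a different engine than the paper's. The paper's own argument is shorter and more elementary: it applies $F$ to the split $kG$-surjection
\[
\bigoplus_{\lambda \in \Lambda} \Hom_{kG}(M_\lambda,S)\otimes_{E_{M_\lambda}} M_\lambda \twoheadrightarrow S
\]
of Lemma~\ref{l:structure}, uses that $F$ commutes with direct sums to identify $F$ of the source with $\bigoplus_\lambda \Hom_{kG}(M_\lambda,S)\otimes_{E_{M_\lambda}}F(M_\lambda)$, and then concludes because each $\Hom_{kG}(M_\lambda,S)$ is finite over $R$ by Lemma~\ref{l:Rmod} and $\Lambda$ is finite. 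You instead invoke the much stronger structure theorem of Section~\ref{s:notation} (which rests on \cite[3.1]{Sy4} plus Lemma~\ref{l:summands}) to write $S \cong \bigoplus_{\lambda,j} k[d_i]_{i\in I_{j,\lambda}}\otimes_k M_\lambda$ via multiplication maps after choosing a Noether normalisation of $R$, and then exploit the partial compatibility of this decomposition with the $d_i$-actions. Both are valid, and the hypotheses of the proposition (finitely many summand types, $S$ finite over $R$) are exactly what make your structure-theorem decomposition available and finite. What your approach buys is a completely explicit finite generating set and a transparent description of how the Noether parameters act on $F(S)$; what it costs is dependence on the deeper result of \cite{Sy4}, which the paper's proof avoids entirely. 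Two small points worth making explicit if you keep your version: the claim that summing the multiplication maps over all $\lambda$ yields a $kG$-isomorphism with $S$ is asserted, not proved, in the paragraph after Lemma~\ref{l:JK} and should be cited as such; and the identification $F\bigl(k[d_i]_{i\in I_{j,\lambda}}\otimes_k M_\lambda\bigr)\cong k[d_i]_{i\in I_{j,\lambda}}\otimes_k F(M_\lambda)$, together with the resulting description of the $d_i$-action, is precisely the observation $F(R\otimes_k M)\cong R\otimes_k F(M)$ from the opening of Section~\ref{s:functor}, which you should cite rather than re-derive via ``naturality'' in the abstract.
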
 

\begin{proof}
By Lemma~\ref{l:structure}, $F(S)$ is a quotient of $\oplus_{\lambda \in \Lambda } \Hom_{kG}(M_\lambda,S) \otimes_{E_M} F(M_\lambda)$ as an $R$-module. Since $\Lambda$ is finite, the latter is finite over $R$.
\end{proof}

\section{Depth}
\label{s:depth}

When we mention depth it will always be calculated over some graded local subalgebra $R$ of $S^G$. When $S$ is finite over $R$ and $S_0$ is a field, this is independent of the choice of $R$ and we will omit $R$ from the notation.

In this section we mostly deal with polynomial rings $k[V]$. Our main tool is the following calculation from \cite[4.2]{Sy-Cech}.

Let $P$ be a $p$-group and let $V$ be a finite dimensional $kP$-module. Set $n = \dim_k V$ and $m=\dim_k V^P$. Given homogeneous elements $y_1, \ldots , y_m \in k[V]^P$ such that $k[V^P]$ is finite over $k[y_1, \ldots , y_m]$, there are elements $z_{m+1}, \ldots , z_n \in k[V]^P$ and a $k[z_{m+1}, \ldots , z_n]P$-submodule $U$ of $k[V]$ such that $U$ is finite over $k[z_{m+1}, \ldots , z_n]$ and the natural map $k[y_1, \ldots , y_m] \otimes _k U \rightarrow k[V]$ induced by multiplication is an isomorphism of $k[y_1, \ldots , y_m,z_{m+1}, \ldots , z_n]P$-modules. Set $R=k[y_1, \ldots , y_m,z_{m+1}, \ldots , z_n]$.

\begin{proposition}
\label{p:tensoroff}
For any $k$-linear functor $F$ from $kP$-modules to $k$ vector spaces that commutes with direct sums we have 
\[F(k[V]) \cong k[y_1, \ldots , y_m] \otimes_k F(U)\]
as $R$-modules. 

It follows that $\dim F(k[V]) = \dim_k V^P + \dim F(U)$ and $\depth F(k[V]) = \dim_k V^P + \depth F(U)$, when $F(k[V]) \ne 0$.

If $F$ also takes finite dimensional $kG$-modules to finite dimensional $k$ vector spaces then, in fact, $\depth F(k[V]) \geq \min \{ \dim F(k[V]), \dim_k V^P+1 \}$ when $F(k[V]) \ne 0$.
\end{proposition}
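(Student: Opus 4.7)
The first claim is almost immediate: the given $kP$-isomorphism $k[V]\cong k[y_1,\dots,y_m]\otimes_k U$ realises $k[V]$ as a direct sum (over monomials in the $y_i$) of copies of the trivial $kP$-module tensored with $U$. Applying $F$, which commutes with direct sums, yields $F(k[V])\cong k[y_1,\dots,y_m]\otimes_k F(U)$, and naturality of $F$ identifies the $R$-module structure with the obvious one: each $y_i$ acts on the polynomial factor, each $z_j$ acts via $F(m_{z_j})$ on $F(U)$.

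For the dimension and depth formulas, the isomorphism just established exhibits $F(k[V])$ as a free $k[y_1,\dots,y_m]$-module with ``basis'' $F(U)$, so $y_1,\dots,y_m$ is a regular sequence of length $m$ on it, and the quotient $F(k[V])/(y_1,\dots,y_m)F(k[V])$ is $F(U)$ viewed as a $B$-module, where $B:=k[z_{m+1},\dots,z_n]=R/(y_1,\dots,y_m)R$. The standard identities then give
\[
\depth_R F(k[V])=m+\depth_B F(U),\qquad \dim_R F(k[V])=m+\dim_B F(U).
\]

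For the last inequality, the plan is to reduce to showing $\depth_B F(U)\geq 1$ whenever $F(U)\neq 0$. To this end, apply the structure theorem of \cite[3.1]{Sy4} to the $kP$-module $U$: as a $kP$-submodule of $k[V]$ it decomposes into finite-dimensional indecomposables with only finitely many isomorphism types appearing (by \cite{Sy-Cech}), and it is finite over the polynomial subring $B$ of $U^P$. One obtains a $BP$-isomorphism
\[
U\;\cong\;\bigoplus_\ell\, k[z_i]_{i\in I_\ell}\otimes_k N_\ell,
\]
with $I_\ell\subseteq\{m+1,\dots,n\}$ and each $N_\ell$ a finite-dimensional indecomposable $kP$-module (the ``missing'' variables $z_j$, $j\notin I_\ell$, acting as zero on the $\ell$-th summand). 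The key point is that $U$ is torsion-free over $B$, being a submodule of the domain $k[V]$, so no summand with $I_\ell=\emptyset$ can appear nontrivially (such a summand would be a finite-length, hence torsion, direct summand of $U$); thus $|I_\ell|\geq 1$ for every summand present.

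Applying $F$ and invoking the extra hypothesis that $F(N_\ell)$ is finite-dimensional, one gets
\[
F(U)\;\cong\;\bigoplus_\ell\, k[z_i]_{i\in I_\ell}\otimes_k F(N_\ell),
\]
a finite direct sum of free modules over polynomial subrings of $B$, each Cohen--Macaulay of dimension $|I_\ell|\geq 1$ (when nonzero). Hence $F(U)\neq 0$ forces $\depth_B F(U)\geq 1$ and $\dim_B F(U)\geq 1$, and the formulas above upgrade this to $\depth_R F(k[V])\geq m+1$ and $\dim_R F(k[V])\geq m+1$, yielding the claimed $\depth F(k[V])\geq \min\{\dim F(k[V]),m+1\}$. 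The main obstacle is the third paragraph: checking that the structure theorem truly applies to $U$ in the required decomposed form, and that the ``no $I_\ell=\emptyset$'' dichotomy is forced by torsion-freeness over $B$. Everything else is routine bookkeeping.
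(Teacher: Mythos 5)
Your first two paragraphs are correct and follow the same reasoning the paper has in mind: the tensor decomposition of $F(k[V])$ is immediate from $F$ commuting with direct sums, and the dimension/depth formulas follow by routine bookkeeping with the regular sequence $y_1,\ldots,y_m$.

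The third part, however, contains a genuine gap. You invoke the structure theorem of \cite[3.1]{Sy4} to produce a \emph{$BP$-module} isomorphism $U \cong \bigoplus_\ell k[z_i]_{i\in I_\ell}\otimes_k N_\ell$ in which the missing variables $z_j$, $j\notin I_\ell$, act as zero, and you then argue that torsion-freeness of $U$ over $B$ forces $I_\ell\ne\emptyset$ for all $\ell$. But the structure theorem (as it is used throughout the paper, e.g.\ in the discussion after Lemma~\ref{l:summands}) gives only a decomposition compatible with multiplication by the \emph{present} variables: it is an isomorphism of graded vector spaces (or of right $\overline{E}_M$-modules), not of $B$-modules. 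The missing $z_j$ do not act as zero on the $\ell$-th piece; multiplication by $z_j$ carries it into the other pieces. Consequently applying $F$ yields only a graded decomposition of $F(U)$, not a $B$-module decomposition, and the conclusion that $F(U)$ is a finite direct sum of Cohen--Macaulay $B$-modules of positive dimension does not follow. Moreover the claimed dichotomy ``no $I_\ell=\emptyset$'' is simply false: in the example of \S\ref{s:examples} with $P=C_3$ and $V$ two-dimensional, $U\cong S_0\oplus S_1\oplus(k[c]\otimes S_2)$ as graded $kP$-modules, so two of the three summands have $I_\ell=\emptyset$, even though $U$ is a torsion-free (in fact free) $k[c]$-submodule of the domain $k[V]$; there is no contradiction precisely because the decomposition is not a $k[c]$-module decomposition. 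If it \emph{were}, your argument would force $U\cong B\otimes_k W$ as a $BP$-module for some finite-dimensional $W$, which the Hilbert series in that example rules out. The paper's own proof of the final inequality takes a different route: Proposition~\ref{p:finite} is used to conclude that $F(U)$ is finite over $R$, and the bound is deduced from that, without any decomposition of $U$.
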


\begin{proof}
This follows immediately from the discussion above, except for the last part. For that, note that the condition on $F$ ensures that $F(U)$ is finite over $R$, by Proposition~\ref{p:finite}. Thus, if $\dim F(U) \geq 1$ then $\depth F(U) \geq 1$.
\end{proof}

\begin{theorem}
\label{th:sub}
Let $G$ be a finite group and $V$ a finite dimensional $kG$-module. Let $M$ be a finite dimensional $kG$-module with vertex $P$. Then $\Hom_{kG}(M,k[V])$ is $0$ if the module of covariants $M_{\ker(V)}$ (the largest quotient on which $\ker(V)$ acts trivially) is $0$. Otherwise it has dimension $\dim_k V$ and depth at least $\min \{ \dim_k V, \dim_k V^P+2 \}$.
\end{theorem}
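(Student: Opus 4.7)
The plan is to dispose of the three assertions in turn. For vanishing, $K := \ker(V)$ acts trivially on $k[V]$, so any $kG$-equivariant $f \colon M \to k[V]$ kills $I(K)M$ and hence factors through the coinvariant quotient $M_K$; if $M_K = 0$ then $f = 0$. Henceforth assume $\Hom_{kG}(M,k[V]) \neq 0$ and set $n = \dim_k V$, $m = \dim_k V^P$. For the dimension, the inclusion $\Hom_{kG}(M,k[V]) \subseteq \Hom_k(M,k[V]) \cong k[V]^{\dim_k M}$ gives $\dim \leq n$, while any nonzero $f$ has zero annihilator in $k[V]^G$ (since $k[V]$ is a domain), so $k[V]^G \cdot f \cong k[V]^G$ embeds, forcing $\dim \geq \dim k[V]^G = n$.

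For the depth, I would reduce in two steps: Proposition~\ref{prop:vertex} makes $\Hom_{kG}(M,k[V])$ a summand of $\Hom_{kP}(W,k[V])$, where $W$ is the source, and Proposition~\ref{p:tensoroff} with $F = \Hom_{kP}(W,-)$ identifies
\[\depth \Hom_{kP}(W,k[V]) = m + \depth_{R_0} \Hom_{kP}(W,U),\]
where $R_0 = k[z_{m+1},\ldots,z_n]$ and $U$ is the $R_0P$-submodule of $k[V]$ from that proposition. So it suffices to prove $\depth_{R_0} \Hom_{kP}(W,U) \geq \min\{n-m,2\}$. The structural input I add is Cohen--Macaulay-ness: $k[V]$ is CM of dimension $n$ over the polynomial subring $R = k[y,z]$ of the same dimension, so by Auslander--Buchsbaum it is free over $R$, and consequently $U \cong k[V]/(y_1,\ldots,y_m)k[V]$ is a finite free $R_0$-module. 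In particular $H^i_{\mathfrak{m}_0}(U) = 0$ for $i < n - m$.

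Let $P_W \cong (kP)^r$ be the projective cover of $W$ as a $kP$-module, so we have a short exact sequence $0 \to \Omega W \to P_W \to W \to 0$. Applying $\Hom_{kP}(-, U)$ produces
\[0 \to \Hom_{kP}(W,U) \to U^r \to Q \to 0,\]
where $Q$ is the image in $\Hom_{kP}(\Omega W,U)$. When $n - m \geq 2$, the long exact sequence of local cohomology, together with $H^0_{\mathfrak{m}_0}(U^r) = H^1_{\mathfrak{m}_0}(U^r) = 0$, gives $H^0_{\mathfrak{m}_0}(\Hom_{kP}(W,U)) = 0$ and an isomorphism $H^1_{\mathfrak{m}_0}(\Hom_{kP}(W,U)) \cong H^0_{\mathfrak{m}_0}(Q)$. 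But $Q$ embeds in $\Hom_{kP}(\Omega W,U) \subseteq \Hom_k(\Omega W,U) \cong U^{\dim_k \Omega W}$, which is free over $R_0$, hence $Q$ is torsion-free and $H^0_{\mathfrak{m}_0}(Q) = 0$. Thus $\depth_{R_0} \Hom_{kP}(W,U) \geq 2$; the cases $n - m = 0, 1$ are trivial or follow by the same torsion-free argument.

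The main obstacle is extracting the ``$+2$'' rather than the ``$+1$'' that Proposition~\ref{p:tensoroff} provides generically. This requires three ingredients unavailable to an arbitrary direct-sum-preserving functor $F$: the left-exactness of $\Hom_{kP}(-,U)$ (which gives the four-term sequence from the projective cover), the Cohen--Macaulay-ness of $k[V]$ (which makes $U$ free over $R_0$, so that the outer terms in the local cohomology sequence vanish), and the torsion-freeness of $\Hom_{kP}(N,U)$ for finite-dimensional $N$ (which kills $H^0_{\mathfrak{m}_0}(Q)$).
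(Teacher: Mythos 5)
There is a genuine gap at the start of the ``otherwise'' branch. The theorem asserts that $M_{\ker(V)}\ne 0$ already forces $\Hom_{kG}(M,k[V])\ne 0$ (since it gives it positive dimension $\dim_k V$), but you replace this hypothesis with the stronger ``henceforth assume $\Hom_{kG}(M,k[V])\ne 0$'' and never return to close the loop. Producing a nonzero equivariant map $M\to k[V]$ from $M_{\ker(V)}\ne 0$ alone is the nontrivial existence step; for it the paper invokes the result of \cite{Sy-poly} that, after passing to $\bar G=G/\ker(V)$, the ring $k[V]$ contains a homogeneous free $k\bar G$-submodule $F$ with $k[V]^P\otimes F\to k[V]$ injective. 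Since $kG$ is self-injective, $F$ contains the injective hull of a simple quotient of $M_{\ker(V)}$, giving the required nonzero $f$; this same $f$ is then used for the lower bound on dimension. Your lower bound on dimension (once $f$ exists, $k[V]^G\cdot f$ is a free $k[V]^G$-submodule because $k[V]$ is a domain) is a fine alternative to the paper's, but the existence of $f$ cannot be skipped.

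The depth argument, on the other hand, is correct and takes a route different from the paper's. You pass to the vertex via Proposition~\ref{prop:vertex}, invoke Proposition~\ref{p:tensoroff}, observe that $U$ is free over $R_0=k[z_{m+1},\ldots,z_n]$ (Cohen--Macaulayness of $k[V]$ plus Auslander--Buchsbaum, and then $U\cong k[V]/(y)k[V]$), and then run the local cohomology long exact sequence of $0\to\Hom_{kP}(W,U)\to U^r\to Q\to 0$ coming from the projective cover, using that $Q$ is a submodule of the free $R_0$-module $\Hom_k(\Omega W,U)$ and hence torsion-free. This delivers $H^0_{\mathfrak m_0}=H^1_{\mathfrak m_0}=0$ when $n-m\ge 2$, and the low-codimension cases by torsion-freeness alone. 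The paper instead applies Lemma~\ref{l:2} directly to the Cohen--Macaulay module $\Hom_k(W,U)$, lifting a length-two regular sequence from $Z$ to $Z^P$; the nonvanishing hypothesis of that lemma is supplied automatically because $P$ is a $p$-group. Both give the crucial ``$+2$''; yours trades the regular-sequence lemma for an explicit local-cohomology computation, at the cost of needing the freeness of $U$, which the paper's route avoids.
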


\begin{proof}
The dimension is clearly at most $\dim_k V$. Any homomorphism from $M$ to $k[V]$ must factor through $M_{\ker(V)}$, so is 0 if $M_{\ker(V)}=0$. Otherwise, by \cite[1.2]{Sy-poly}, $k[V]$ contains a free $k(G/\ker(V))$-module $F$ that is a sum of homogeneous pieces and such that the multiplication map $k[V]^P \otimes F \rightarrow k[V]$ is injective. In particular, $F$ contains the injective hull of a simple quotient of $M_{\ker(V)}$. This gives us a non-zero homomorphism $f \! : M \rightarrow k[V]$.  Multiplication by $f$ embeds $k[V]^P$ into $\Hom_{kG}(M,k[V])$, so the latter must have dimension at least $\dim_k V$. 

For the depth we use Proposition~\ref{prop:vertex} to reduce to the case of a $p$-group and we let $U$ be as in Proposition~\ref{p:tensoroff}. If $\dim \Hom_{kG}(M,U) \leq 1$, we just use Proposition~\ref{p:tensoroff}. If $\dim \Hom_{kG}(M,U) \geq 2$ we use Lemma~\ref{l:2} below on $\Hom_k(M,U)$, which is Cohen-Macaulay.
\end{proof}

\begin{lemma}
\label{l:2}
Let $Z$ be a $k[a,b]G$-module, where $G$ commutes with $a$ and $b$. If $a,b$ is a regular sequence on $Z$ then it is also regular on $Z^G$, provided that $Z^G \ne aZ^G + b Z^G$. If $Z$ is graded and $0$ in large negative degrees and if both $a$ and $b$ are homogeneous of positive degree, then this condition reduces to $Z^G \ne 0$, which is guaranteed if $Z \ne 0$ and $G$ is a $p$-group.
\end{lemma}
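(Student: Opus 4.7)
The plan is to check the two conditions for $a,b$ being a regular sequence on $Z^G$ directly, exploiting only that $a$ and $b$ commute with the $G$-action and are nonzerodivisors on $Z$ and on $Z/aZ$ respectively.

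First, $a$ is a nonzerodivisor on $Z^G$: this is automatic, since $Z^G \subseteq Z$ and $a$ is a nonzerodivisor on $Z$. The main step is to show $b$ is a nonzerodivisor on $Z^G/aZ^G$. Suppose $z \in Z^G$ and $bz = aw$ for some $w \in Z^G$. Then $bz \in aZ$, so by regularity of $a,b$ on $Z$ there exists $u \in Z$ with $z = au$. I need to upgrade $u$ to an element of $Z^G$. For any $g \in G$, applying $g$ to $z = au$ and using that $g$ commutes with $a$ gives $au = z = gz = a(gu)$, hence $a(gu - u) = 0$. Since $a$ is a nonzerodivisor on $Z$, $gu = u$ for all $g \in G$, so $u \in Z^G$ and $z \in aZ^G$. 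Combined with the hypothesis $Z^G \ne aZ^G + bZ^G$, this says $a,b$ is regular on $Z^G$.

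For the graded addendum, suppose $Z$ is graded, zero in large negative degrees, and $a,b$ are homogeneous of positive degree. Assuming $Z^G \ne 0$, let $d$ be the smallest integer with $(Z^G)_d \ne 0$. Any element of $aZ^G + bZ^G$ in degree $d$ must come from $Z^G$ in strictly smaller degrees, of which there are none. Therefore $(aZ^G + bZ^G)_d = 0 \ne (Z^G)_d$, and in particular $Z^G \ne aZ^G + bZ^G$, reducing the hypothesis to $Z^G \ne 0$.

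Finally, when $G$ is a $p$-group (and $k$ has characteristic $p$), if $Z \ne 0$ then $Z^G \ne 0$: pick any nonzero $z \in Z$; the cyclic submodule $kGz$ is finite-dimensional over $k$, and any nonzero finite-dimensional $kG$-module with $G$ a finite $p$-group in characteristic $p$ has nontrivial fixed points. There is no serious obstacle here; the only point that requires a moment's care is the promotion of $u$ to $Z^G$ in the second regularity check, and that is what forces us to use that $a$ is a nonzerodivisor on the whole of $Z$ rather than merely on $Z^G$.
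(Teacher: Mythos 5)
Your proof is correct and follows essentially the same route as the paper's: injectivity of $a$ on $Z^G$ is inherited from $Z$, regularity of $a,b$ on $Z$ is used to write $z = au$ in $Z$, and commutation of $G$ with $a$ together with torsion-freeness of $a$ upgrades $u$ to a $G$-invariant. The graded reduction via a minimal-degree element and the $p$-group observation likewise match the paper, with you supplying a touch more detail on why a nonzero $kG$-module has nonzero invariants when $G$ is a $p$-group.
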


\begin{proof}
Since $a$ acts without torsion on $Z$, it also does so on $Z^G$. If $z \in Z^G$ is such that $bz \in aZ^G$ then $z \in aZ$, since $a,b$ is a regular sequence on $Z$; say $z=ay$ for $y \in Z$. For any $g \in G$, $0=(g-1)z = a(g-1)y$, so $(g-1)y=0$. Thus $y \in Z^G$ and $z \in aZ^G$.
The proviso is part of the most common definition of a regular sequence \cite{BH, Eisenbud}. 

For the second part, if $z \in Z$ is non-zero of minimal degree then it cannot be in $aZ^G + b Z^G$. The existence of a regular sequence on $Z$ implies that $Z \ne 0$ and thus $Z^G \ne 0$ if $G$ is a $p$-group.
\end{proof}  

The case $M=k$ of Theorem~\ref{th:sub} is a result of Ellingsrud and Skjelbred \cite{ES}. For some classes of groups, for example cyclic ones, the lower bound on the depth in this case is known to be sharp, but this does not always hold: see \cite{FKS}. In fact, \cite{FKS} contains a somewhat stronger result that we now generalise. Recall that the depth of a module $M$ over a graded Noetherian local ring $R$ can be characterised as the smallest positive integer $i$ such that  $\Ext^i(R/m,M) \ne 0$; see \cite[1.2.8]{BH}, or \cite[9.1.2]{BH}, \cite[proof of 3.14]{Totaro} for the infinitely generated case.

First we generalise Lemma~\ref{l:2}.

\begin{lemma}
\label{l:j}
Given two graded $k$-algebras $A$ and $B$ in non-negative degrees, let $X$ be an $A$-module, $Y$ a $B$-module and $Z$ an $A \otimes_k B$-module (where $A$ and $B$ commute). Suppose that, for some integer $m$, we have $\Ext_A^i(X,\Ext_B^j(Y,Z))=0$ for $i+j \leq m$, $j \geq 1$ and $\Ext_B^i(Y,\Ext_A^j(X,Z))=0$ for $i+j \leq m+1$; then $\Ext^i_A(X,\Hom_B(Y,Z))=0$ for $i \leq m+1$.
\end{lemma}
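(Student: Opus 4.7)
The plan is to invoke the standard pair of double-complex spectral sequences comparing $\Hom_A(X,-)$ and $\Hom_B(Y,-)$ applied to $Z$. I would pick graded projective resolutions $P_\bullet \to X$ over $A$ and $Q_\bullet \to Y$ over $B$; since $k$ is a field, $P_\bullet \otimes_k Q_\bullet \to X \otimes_k Y$ is automatically a graded projective resolution over $A \otimes_k B$. The standard Hom-tensor adjunction
\[
\Hom_{A \otimes_k B}(P_\bullet \otimes_k Q_\bullet, Z) \;\cong\; \Hom_A(P_\bullet, \Hom_B(Q_\bullet, Z)) \;\cong\; \Hom_B(Q_\bullet, \Hom_A(P_\bullet, Z))
\]
then shows that the double complex $C^{p,q} := \Hom_A(P_p, \Hom_B(Q_q, Z))$ has total cohomology $\Ext^{p+q}_{A \otimes_k B}(X \otimes_k Y, Z)$.

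The two filtrations of $C^{\bullet,\bullet}$ produce first-quadrant spectral sequences with a common abutment,
\[
E_2^{p,q} = \Ext_A^p(X, \Ext_B^q(Y, Z)) \;\Longrightarrow\; \Ext^{p+q}_{A \otimes_k B}(X \otimes_k Y, Z),
\]
\[
\widetilde E_2^{p,q} = \Ext_B^p(Y, \Ext_A^q(X, Z)) \;\Longrightarrow\; \Ext^{p+q}_{A \otimes_k B}(X \otimes_k Y, Z),
\]
and the rest of the argument becomes a short chase. The second hypothesis kills every $\widetilde E_2^{p,q}$ with $p+q \leq m+1$, so the common abutment vanishes in total degrees $\leq m+1$. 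In the first spectral sequence I want $E_2^{n,0} = \Ext_A^n(X, \Hom_B(Y,Z)) = 0$ for $n \leq m+1$: any outgoing differential from $E_r^{n,0}$ lands in negative vertical degree and is zero, while an incoming differential $d_r : E_r^{n-r,\,r-1} \to E_r^{n,0}$ (with $r \geq 2$) has source at bidegree with $j = r-1 \geq 1$ and $i+j = n-1 \leq m$, which is exactly the region killed by hypothesis 1. Thus $E_2^{n,0} = E_\infty^{n,0}$, a subquotient of the vanishing abutment, so it vanishes.

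The main thing to be careful about is the bidegree bookkeeping: hypothesis 1 is deliberately asymmetric (``$j \geq 1$, $i+j \leq m$'' rather than the stronger ``$i+j \leq m+1$''), and this asymmetric shape is precisely what matches the locus of potential incoming differentials to the column $q = 0$ out to total degree $m+1$, given that $d_r$ shifts bidegree by $(r,\,1-r)$. I expect no serious obstacle beyond setting up the two spectral sequences carefully in the graded $A \otimes_k B$-module category and verifying the filtration/differential arithmetic.
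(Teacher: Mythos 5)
Your proposal is correct and takes essentially the same route as the paper: both derive two spectral sequences from the adjunction identifying $\Hom_A(X,\Hom_B(Y,Z))$, $\Hom_{A\otimes_k B}(X\otimes_k Y,Z)$, and $\Hom_B(Y,\Hom_A(X,Z))$, use the second hypothesis to kill the common abutment in total degrees $\leq m+1$, and use the first hypothesis to show the bottom row of the first spectral sequence survives to $E_\infty$. Your explicit double-complex construction $C^{p,q}=\Hom_A(P_p,\Hom_B(Q_q,Z))$ is a clean and concrete realization of what the paper leaves implicit.
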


Note that $\Ext_B^j(Y,Z)$ gets its structure as an $A$-module from the action of $A$ on $Z$, as in Section~\ref{s:functor}.

\begin{proof}
We have natural isomorphisms
\[\Hom_A(X,\Hom_B(Y,Z)) \cong \Hom_{A \otimes_kB}(X \otimes_kY,Z) \cong \Hom_B(Y,\Hom_A(X,Z)),
\]
from which we obtain two spectral sequences converging to $\Ext^*_{A \otimes_kB}(X \otimes_k Y,Z)$.

One has $E_2$-term $E_2^{i,j}=\Ext_B^i(Y,\Ext_A^j(X,Z))$, which is $0$ for $i+j \leq m+1$, so $\Ext^{\ell}_{A \otimes_kB}(X \otimes_k Y,Z)=0$ for $\ell \leq m+1$. The other has all entries above the bottom row and in the region $i+j \leq m$ equal to $0$. Thus there are no non-zero differentials with codomain in $E_*^{i,0}$ for $i \leq m+1$ on this or any subsequent page of the spectral sequence.  Since we know that  $E_{\infty}^{i,0}=0$ for $i \leq m+1$, we must have $0=E_2^{i,0}=\Ext^i_A(X,\Hom_B(Y,Z))$ in this range.
\end{proof}

We now apply this in the case when $S$ is a graded $k$-algebra with action of $G$ and $R \leq S^G$ is a graded local $k$-subalgebra, by setting $A=R$, $B = kG$, $X=R/m$, $Y=M$ and $Z=S$.

\begin{theorem}
\label{t:j}
We have 
\[
\depth_R \Hom_{kG}(M,S) \geq \min \{ \depth_R S, \min_{j \geq 1} \{\depth_R \Ext_{kG}^j(M,S)+j+1 \} \}.
\]
When $S=k[V]$ and $S$ is finite over $R$ this becomes
\[
\depth \Hom_{kG}(M,k[V]) \geq \min \{ \dim_k V, \dim_k V^P + \min_{j \geq 1} \{\depth \Ext_{kG}^j(M,U)+j+1 \} \}.
\]
Here $U$ is as in Proposition~\ref{p:tensoroff} and we interpret the depth of a zero module as $\infty$.
\end{theorem}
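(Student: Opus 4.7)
The plan is to apply Lemma~\ref{l:j} with $A=R$, $B=kG$, $X=R/\mathfrak{m}$, $Y=k$ (trivial module), and $Z=\Hom_k(M,S)$. These data are compatible: $R$ and $kG$ act on $Z$ in commuting ways, $R$ through its action on $S$ (recall $R\leq S^G$) and $kG$ diagonally. Under the Hom-tensor adjunction, $\Hom_{kG}(k,\Hom_k(M,S))\cong\Hom_{kG}(M,S)$; and because $\Hom_k(M,-)$ is exact, the corresponding derived groups satisfy $\Ext^j_{kG}(k,\Hom_k(M,S))\cong\Ext^j_{kG}(M,S)$. On the other side, $\Hom_k(M,S)\cong S\otimes_k M^*$ is a finitely generated free $S$-module, hence has $R$-depth equal to $\depth_R S$, so $\Ext^j_R(R/\mathfrak{m},\Hom_k(M,S))=0$ for $j<\depth_R S$.

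I now set
\[
N:=\min\Bigl\{\depth_R S,\ \min_{j\geq 1}\bigl\{\depth_R\Ext^j_{kG}(M,S)+j+1\bigr\}\Bigr\}
\]
and take $m:=N-2$. For hypothesis (1) of Lemma~\ref{l:j}, let $i+j\leq m$ with $j\geq 1$; then $i\leq N-2-j<\depth_R\Ext^j_{kG}(M,S)$ by the choice of $N$, so $\Ext^i_R(R/\mathfrak{m},\Ext^j_{kG}(M,S))=0$ by the Ext-characterization of depth. For hypothesis (2), let $i+j\leq m+1=N-1$; then $j\leq N-1<\depth_R S$, so the inner group $\Ext^j_R(R/\mathfrak{m},\Hom_k(M,S))$ is already zero. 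The lemma now yields $\Ext^i_R(R/\mathfrak{m},\Hom_{kG}(M,S))=0$ for $i\leq m+1=N-1$, i.e.\ $\depth_R\Hom_{kG}(M,S)\geq N$, which is the first inequality. (Zero $\Ext^j_{kG}(M,S)$ contribute $\infty$ and drop out of the minimum, consistent with the stated convention.)

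For the polynomial case $S=k[V]$ finite over $R$, recall that $k[V]$ is Cohen-Macaulay, so $\depth_R k[V]=\dim_k V$. Writing $M$ as a summand of $W{\uparrow}^G_P$ and applying Eckmann-Shapiro, $\Hom_{kG}(M,k[V])$ and each $\Ext^j_{kG}(M,k[V])$ become $R$-summands of their $(kP,W)$-analogues, so it suffices to treat the case where $G=P$ is itself a $p$-group acting on $V$. Then Proposition~\ref{p:tensoroff} applied to the functor $F=\Ext^j_{kG}(M,-)$, which is $k$-linear, commutes with direct sums (as $M$ is finitely presented) and sends finite dimensional modules to finite dimensional ones, gives $\depth_R\Ext^j_{kG}(M,k[V])=\dim_k V^P+\depth\Ext^j_{kG}(M,U)$ whenever the group is nonzero. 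Substituting these values together with $\depth_R k[V]=\dim_k V$ into the first inequality produces the second.

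The hard part is the bookkeeping in Lemma~\ref{l:j}: one has to choose $m$ so that the two hypotheses hold \emph{simultaneously} at the same threshold, which is precisely why the minimum in $N$ combines both $\depth_R S$ and the shifted depths of the higher Ext groups. Everything after that is essentially formal substitution together with the clean splitting furnished by Proposition~\ref{p:tensoroff}.
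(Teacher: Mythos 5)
Your proof is correct and matches the paper's approach. The paper does not spell out a proof at all: it simply states, in the sentence immediately preceding the theorem, that one applies Lemma~\ref{l:j} with $A=R$, $B=kG$, $X=R/\mathfrak m$, $Y=k$, $Z=\Hom_k(M,S)$; you have filled in exactly the required bookkeeping (the identification $\Ext^j_{kG}(k,\Hom_k(M,S))\cong\Ext^j_{kG}(M,S)$, the observation that $\Hom_k(M,S)$ is $S$-free so $\depth_R\Hom_k(M,S)=\depth_R S$, and the choice $m=N-2$ which makes both hypotheses of Lemma~\ref{l:j} hold), and you obtain the second inequality by combining the first with Proposition~\ref{p:tensoroff} and the Cohen--Macaulayness of $k[V]$. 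One small point: the theorem's second display is really for a $p$-group acting on $V$ (as $U$ only exists in that setting), so your reduction via Proposition~\ref{prop:vertex} is an extra, harmless step, and strictly speaking the formula you end up proving after the reduction involves $\Ext^j_{kP}(W,U)$ rather than $\Ext^j_{kG}(M,U)$ — but this matches the paper's own somewhat loose notation, and the substance of the argument is right.
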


\begin{theorem}
\label{th:ext}
Let $G$ be a finite group and $V$ a finite dimensional $kG$-module. Let $M$ be a finite dimensional $kG$-module with vertex $P$. Then
$\Hom^{\mathcal X}_{kG}(M,k[V])$ has dimension at most $\max \{ \dim_k V^Q; Q \in \mathcal S_p(G) \smallsetminus \mathcal X \} $ and if not 0 has depth at least $\dim_k V^P$.

In particular,  $\widehat{\Hom}_{kG}(M,k[V])$ and $\Ext^i_{kG}(M,k[V])$ for $i \geq 1$ have dimension at most $\max \{ \dim_k V^C; C \leq G, |C|=p \} $ and depth at least $\dim_k V^P$ (if it is not 0).
\end{theorem}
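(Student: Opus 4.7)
The plan is to first establish both bounds for $\Hom^{\mathcal X}_{kG}(M,k[V])$ and then derive the two specialisations: the $\widehat{\Hom}$-case as $\mathcal X=\{1\}$, and the $\Ext^i$-case via Tate dimension-shifting.

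For the dimension bound, Corollary~\ref{cor:bound}(1) gives
\[
\dim \Hom^{\mathcal X}_{kG}(M,k[V]) \leq \max_{Q \in \mathcal S_p(P) \setminus \mathcal X} \dim \Spec(k[V])^Q.
\]
Since the $Q$-action on $V$ is linear, the fixed-point scheme $\Spec(k[V])^Q$ has reduced structure the linear subspace $V^Q$, of dimension $\dim_k V^Q$. Enlarging the range of $Q$ from $\mathcal S_p(P)$ to $\mathcal S_p(G)$ only weakens the bound, giving the inequality as stated.

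For the depth, apply Proposition~\ref{prop:vertex} to reduce to the $p$-group case $G=P$, $M=W$ the source. Take $F = \Hom^{\mathcal X}_{kP}(W,-)$: it is $k$-linear, commutes with direct sums (since $W$ is finite-dimensional and transfer is additive), and sends finite-dimensional modules to finite-dimensional ones. Proposition~\ref{p:tensoroff} then yields the $R$-module isomorphism $F(k[V]) \cong k[y_1,\ldots,y_m] \otimes_k F(U)$ with $m = \dim_k V^P$, and in particular $\depth F(k[V]) = \dim_k V^P + \depth F(U)$. Proposition~\ref{p:finite} makes $F(U)$ finite over $k[z_{m+1},\ldots,z_n]$, hence of non-negative depth whenever it is nonzero---and it is nonzero exactly when $F(k[V])$ is---yielding $\depth F(k[V]) \geq \dim_k V^P$.

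The $\widehat{\Hom}$-statement is the case $\mathcal X=\{1\}$: every nontrivial $p$-subgroup $Q$ contains some $C$ of order $p$ with $V^Q \subseteq V^C$, collapsing the maximum to one over subgroups of order $p$. For $\Ext^i_{kG}(M,k[V])$ with $i \geq 1$, I would use $\Ext^i = \widehat{\Ext}^i$ together with the Tate dimension-shifting identity $\widehat{\Ext}^i_{kG}(M,N) \cong \widehat{\Hom}_{kG}(\Omega^i M,N)$. The syzygy $\Omega^i M$ is finite-dimensional, and each of its indecomposable summands has vertex inside a conjugate of $P$ (since $M \mid \Ind_P^G W$ implies $\Omega M \mid \Ind_P^G \Omega W$ in the stable category). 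Applying the $\widehat{\Hom}$-bounds summand by summand gives $\dim \leq \max_{|C|=p,\, C \leq G} \dim_k V^C$ directly, and $\depth \geq \dim_k V^{P'} \geq \dim_k V^P$ for each summand---a smaller vertex \emph{enlarges} the fixed subspace, so the direction of the inequality works in our favour---and the depth of the finite direct sum is the minimum. The main point to keep track of is precisely this direction of the depth inequality after dimension-shifting through $\Omega$.
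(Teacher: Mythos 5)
Your proposal is correct and follows the same route as the paper's proof: Corollary~\ref{cor:bound} for the dimension bound, Propositions~\ref{prop:vertex} and \ref{p:tensoroff} for the depth bound, and the identification $\Ext^i_{kG}(M,k[V]) \cong \widehat{\Hom}_{kG}(\Omega^i M,k[V])$ for the $\Ext$ case. The only stylistic difference is in the last step: the paper simply notes that $\Omega$ preserves the vertex of an indecomposable non-projective module (so the bound is transferred exactly), whereas you allow for the possibility that the vertex of a summand of $\Omega^i M$ is strictly smaller than $P$ and observe that this only strengthens the depth inequality; both are fine, and your phrasing ``smaller vertex enlarges the fixed subspace'' correctly captures why the direction works out.
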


\begin{proof}
The result for dimension follows from Corollary~\ref{cor:bound} and that for depth from Propositions~\ref{prop:vertex} and \ref{p:tensoroff}. Observe that $\Ext^i_{kG}(M,k[V]) \cong \widehat{\Hom}_{kG}(\Omega^i M,k[V])$,  where $\Omega$ is the Heller syzygy operator, and note that $\Omega$ preserves the vertex of the module. 
\end{proof}

\begin{theorem}
\label{th:[]}
Let $G$ be a finite group and $V$ a finite dimensional $kG$-module. Let $M$ be a finite dimensional indecomposable $kG$-module with vertex $P$. Then $\Hom_{[kG]}(M,k[V])$ is Cohen-Macaulay of dimension $\dim_k V^P$ or is 0.
\end{theorem}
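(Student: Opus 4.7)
The plan is to deduce the theorem directly from Theorem~\ref{th:ext} after first ruling out the case where the vertex is not a Sylow $p$-subgroup of $G$. Set $\mathcal X = \mathcal S_p(G) \setminus \Syl_p(G)$, so that by definition $\Hom_{[kG]}(M,k[V]) = \Hom^{\mathcal X}_{kG}(M,k[V])$. Suppose first that the vertex $P$ is not a Sylow $p$-subgroup; then $P$ is a proper subgroup of some Sylow, so $P \in \mathcal X$. Since $M$ is $P$-projective by the definition of vertex, Higman's criterion provides $\phi \in \End_{kP}(M)$ with $\tr^G_P(\phi) = \Id_M$. For any $f \in \Hom_{kG}(M, k[V])$, the fact that $f$ commutes with the $G$-action gives
\[
f = f \circ \tr^G_P(\phi) = \tr^G_P(f \circ \phi),
\]
so $f$ lies in the image of the transfer from $P \in \mathcal X$ and is killed in $\Hom^{\mathcal X}_{kG}$. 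Thus $\Hom_{[kG]}(M, k[V]) = 0$, and the ``or $0$'' alternative of the theorem holds.

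Now assume $P \in \Syl_p(G)$. The key observation is that $\mathcal S_p(G) \setminus \mathcal X = \Syl_p(G)$, and all Sylow $p$-subgroups are conjugate to $P$, so $\dim_k V^Q = \dim_k V^P$ for every $Q$ in this set. Applying Theorem~\ref{th:ext} with this $\mathcal X$ therefore gives
\[
\dim \Hom_{[kG]}(M, k[V]) \leq \max_{Q \in \Syl_p(G)} \dim_k V^Q = \dim_k V^P,
\]
and, when the module is non-zero, $\depth \Hom_{[kG]}(M, k[V]) \geq \dim_k V^P$. Because $\Hom_{[kG]}(M,k[V])$ is finitely generated over a graded Noether normalization $R \subseteq k[V]^G$ by Lemma~\ref{l:Rmod}, the standard inequality $\depth \leq \dim$ applies; combined with the two bounds above, it forces $\dim = \depth = \dim_k V^P$. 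Hence the module is either $0$ or Cohen--Macaulay of dimension $\dim_k V^P$.

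The only genuinely non-trivial step is the vanishing argument in the first paragraph: one must recognise that non-vanishing of $\Hom_{[kG]}(M,k[V])$ forces the vertex to be a Sylow $p$-subgroup. After that, the theorem is a direct corollary of Theorem~\ref{th:ext}, reflecting the fortunate coincidence that for this particular $\mathcal X$ the dimension upper bound and the depth lower bound supplied there happen to coincide, precisely because every element of $\mathcal S_p(G) \setminus \mathcal X$ is conjugate to the vertex.
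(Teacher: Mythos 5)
Your proof is correct and takes essentially the same approach as the paper: the dimension upper bound and the depth lower bound coincide at $\dim_k V^P$, forcing Cohen--Macaulayness (the paper's terse proof invokes Corollary~\ref{cor:bound} and Propositions~\ref{prop:vertex}, \ref{p:tensoroff} directly, which is exactly what Theorem~\ref{th:ext} packages). Your explicit Higman-criterion argument for the vanishing when the vertex $P$ is not Sylow is a helpful addition that the paper leaves implicit, but it is not a different method.
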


In the case $M=k$, this was proved by Totaro \cite[9.2]{Totaro} and this case modulo the radical by Fleischmann \cite{Fleischmann}.

\begin{proof}
Again, the upper bound on the dimension follows from Corollary~\ref{cor:bound} and a lower bound on the depth from Propositions~\ref{prop:vertex} and \ref{p:tensoroff}. They are equal.
\end{proof}

\section{Summands}
\label{s:sum}

\begin{theorem}
\label{th:sum}
Let $G$ be a finite group and $V$ a finite dimensional $kG$-module. Let $M$ be a finite dimensional indecomposable $kG$-module with vertex $P$, source $U$ and inertia subgroup $I$.  Then, if it is not zero,  $\Hom^{\oplus}_{kG}(M,k[V])$ has dimension at most $\dim_k V^P$ and depth at least $\dim_k V^{I_p}$.
\end{theorem}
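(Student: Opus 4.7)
My plan is to handle the dimension and depth bounds separately, reducing in each case to tools established earlier in the paper. The dimension bound is a direct application of Corollary~\ref{cor:bound}(1): take $\mathcal X$ to be the subgroup- and conjugation-closed class of $p$-subgroups of $G$ that are $G$-conjugate to proper subgroups of $P$. Since the vertex $P$ of $M$ does not lie in $\mathcal X$, there is a natural surjection $\Hom^{\mathcal X}_{kG}(M,k[V]) \twoheadrightarrow \Hom^{\oplus}_{kG}(M,k[V])$ (as in the proof of Corollary~\ref{cor:bound}(2)), and because $\mathcal S_p(P)\setminus\mathcal X=\{P\}$ the right-hand side of Corollary~\ref{cor:bound}(1) collapses to $\dim\Spec(k[V])^P = \dim_k V^P$.

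For the depth bound, the strategy is to realise $\Hom^{\oplus}_{kG}(M,k[V])$ as an $R$-summand of $\Hom^{\oplus}_{kI_p}(W,k[V])$ for an appropriate indecomposable $kI_p$-module $W$ with vertex $P$ and source $U$, and then apply Proposition~\ref{p:tensoroff} over the $p$-group $I_p$. Standard Clifford--Green theory supplies an indecomposable $kI$-module $N$ with vertex $P$ and source $U$ such that $M\cong \Ind_I^G N$, and since $[I:I_p]$ is coprime to $p$ and $N$ has vertex $P\leq I_p$, a direct summand $W$ of $\Res_{I_p}^I N$ exists that is indecomposable, has vertex $P$ and source $U$, and satisfies $M\mid \Ind_{I_p}^G W$. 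By Frobenius reciprocity one then has $\Hom_{kG}(M,S)\cong \Hom_{kI}(N,S)$, and the Sylow splitting $|I:I_p|^{-1}\tr_{I_p}^I$ makes this an $R$-summand of $\Hom_{kI_p}(\Res_{I_p}^I N,S)=\Hom_{kI_p}(W,S)\oplus\Hom_{kI_p}(W'',S)$, where $\Res_{I_p}^I N=W\oplus W''$.

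The crux of the proof is to descend this summand relation to the $\oplus$-quotients. Using Lemma~\ref{l:JK}, it suffices to show that a $kG$-map $f:M\to k[V]$ factors through some finite-dimensional indecomposable $kG$-module not isomorphic to $M$ if and only if its image under Frobenius-plus-restriction factors in the analogous way through $kI_p$-indecomposables. The vertex/source constraints of inertial Green correspondence guarantee that the Mackey error terms in $\Res_I^G M$ and $\Res_{I_p}^I N$ carry no summand isomorphic to $N$ or $W$, so split injectivity at the $kG$-level is recorded exactly on the $W$-component at the $kI_p$-level. Granted this, Proposition~\ref{p:tensoroff} applied with $F=\Hom^{\oplus}_{kI_p}(W,-)$---a functor on $kI_p$-modules commuting with direct sums by Lemma~\ref{l:sum}---yields $\Hom^{\oplus}_{kI_p}(W,k[V])\cong k[y_1,\ldots,y_{m'}]\otimes_k\Hom^{\oplus}_{kI_p}(W,U')$ with $m'=\dim_k V^{I_p}$, whose depth is at least $m'$ when nonzero; the $R$-summand $\Hom^{\oplus}_{kG}(M,k[V])$ inherits the same bound. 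I expect the main obstacle to be this $\oplus$-descent step, since the Frobenius and Sylow operations preserve $\Hom$ but not a priori the ideals $J$ that define the $\oplus$ quotient, so the vertex/source bookkeeping in the Mackey formulas is what makes the argument close up.
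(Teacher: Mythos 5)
The dimension half of your argument is exactly the paper's: take $\mathcal X$ to be the proper subconjugates of $P$, so that $\mathcal S_p(P)\setminus\mathcal X=\{P\}$ and Corollary~\ref{cor:bound}(1) gives $\dim\Spec(k[V])^P=\dim_k V^P$. Your first reduction step for the depth, from $G$ to the inertia group $I$, is also essentially the paper's Proposition~\ref{p:inertia}, which establishes $\Hom^{\oplus}_{kG}(M,-)\cong\Hom^{\oplus}_{kI}(Y,-\downarrow^G_I)$ via Green correspondence.

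The gap is in the step you yourself flag as the crux: descending the Sylow splitting from $\Hom$ to $\Hom^{\oplus}$ on the way from $I$ to $I_p$. The functor $\Hom^{\oplus}$ is \emph{not} a cohomological Mackey functor, and the ideal $J$ (equivalently $K$, by Lemma~\ref{l:JK}) is not preserved by restriction. Concretely, if $f\colon N\to S$ factors through an indecomposable $kI$-module $L\not\cong N$, then $\res^I_{I_p}f$ factors through $L\downarrow_{I_p}$, and nothing prevents $W$ from occurring as a summand of $L\downarrow_{I_p}$; in that case the $W$-component of $\res f$ can perfectly well be split injective, so $\res$ need not carry $J_I(N,S)$ into $J_{I_p}(W,S)$. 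Your appeal to ``vertex/source bookkeeping in the Mackey formulas'' addresses the wrong object: the Mackey decomposition controls the summands of $\Res M$ and $\Res N$, not the restrictions of the \emph{other} indecomposables $L$ through which a non-split map may factor, and it is the latter that cause the ideal $J$ to misbehave. So the asserted ``if and only if'' between $kG$-level and $kI_p$-level factorization is not established, and I do not see a direct way to make it true; $\Hom^{\oplus}_{kG}(M,k[V])$ need not be a summand of $\Hom^{\oplus}_{kI_p}(W,k[V])$.

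The paper circumvents exactly this difficulty with Proposition~\ref{p:red}: for non-projective $M$ it takes the almost split sequence $0\to M\to L\to K\to 0$ and the resulting four-term exact sequence of functors $0\to\Hom_{kG}(K,-)\to\Hom_{kG}(L,-)\to\Hom_{kG}(M,-)\to\Hom^{\oplus}_{kG}(M,-)\to 0$. The three $\Hom$-functors \emph{are} cohomological Mackey functors, so the $\res$/$\tr$ splitting works termwise, and one concludes that $\Hom^{\oplus}_{kG}(M,-)$ is a summand of the \emph{cokernel} of $\Hom_{kG_p}(L\downarrow,-\downarrow)\to\Hom_{kG_p}(M\downarrow,-\downarrow)$ --- a functor on $kG_p$-modules that commutes with direct sums and takes finite-dimensional modules to finite-dimensional ones, hence to which Proposition~\ref{p:tensoroff} applies --- but which is \emph{not} $\Hom^{\oplus}_{kG_p}(W,-)$. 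This is the device your proposal is missing. You would also need to treat projective $M$ separately (no almost split sequence exists), which the paper does in Proposition~\ref{p:proj} via the short exact sequence $0\to\Hom^{\oplus}_{kG}(E,-)\to\Hom_{kG}(E,-)\to\widehat{\Hom}_{kG}(T,-)\to 0$.
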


The upper bound on the dimension is from Corollary~\ref{cor:bound}. The proof of the lower bound on the depth will occupy the rest of this section.

\begin{corollary}
\label{c:sum}
In the circumstances of the previous theorem, $\Hom^{\oplus}_{kG}(M,k[V])$ is Cohen-Macaulay if $I_p=P$, in particular if $P$ is a Sylow $p$-subgroup. The ring of trivial summands, $S^{\oplus G}$, is always Cohen-Macaulay.
\end{corollary}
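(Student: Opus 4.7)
The plan is to derive both statements directly from Theorem~\ref{th:sum} by sandwiching the depth and the dimension. For the first assertion, I would observe that if $I_p = P$ then $\dim_k V^{I_p} = \dim_k V^P$, so Theorem~\ref{th:sum} gives
\[
\dim_k V^P \leq \depth \Hom^{\oplus}_{kG}(M,k[V]) \leq \dim \Hom^{\oplus}_{kG}(M,k[V]) \leq \dim_k V^P,
\]
where the middle inequality is the general bound $\depth \leq \dim$ that holds over our graded local ring $R \subseteq k[V]^G$. Thus depth equals dimension, which is exactly the Cohen--Macaulay condition. (The case $\Hom^{\oplus}_{kG}(M,k[V])=0$ is trivial.)

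For the second assertion, I would specialise to $M = k$ and check that the hypothesis $I_p = P$ of the first part is automatic. The trivial module has vertex equal to a Sylow $p$-subgroup $P$ of $G$ (taking $P=1$ when $p \nmid |G|$) and source the trivial module, which is stabilised by all of $N_G(P)$; hence $I = N_G(P)$. The key small observation is that $P$ is itself a Sylow $p$-subgroup of $N_G(P)$: any $p$-subgroup $Q \leq N_G(P)$ satisfies $PQ \leq G$ being a $p$-subgroup (since $P$ is normal in $N_G(P)$), and maximality of $P$ in $G$ forces $PQ = P$, so $Q \leq P$. Therefore $I_p = P$, and the first part of the corollary gives Cohen--Macaulayness of $S^{\oplus G}$.

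There is essentially no obstacle here; the work is all in Theorem~\ref{th:sum}. The only minor points to take care of are the trivial case $p \nmid |G|$ (where $P=1$, $I=G$, $I_p=1=P$, so the argument still applies) and checking that $S^{\oplus G}$ is not zero, which holds because the identity map $k \to S$ represents a nonzero element (the trivial module appears as a summand via the inclusion $k \hookrightarrow S_0$).
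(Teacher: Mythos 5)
Your proof is correct and is exactly the intended unpacking: the paper states Corollary~\ref{c:sum} without proof, treating it as an immediate consequence of Theorem~\ref{th:sum} together with the general inequality $\depth \leq \dim$ (valid here since the module is finite over $R$ by Lemma~\ref{l:Rmod}) and the standard facts that the trivial module has a Sylow $p$-subgroup as vertex and inertia group $N_G(P)$, whose Sylow $p$-subgroup is $P$.
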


\begin{proposition}
\label{p:inertia}
Let $M$ be a finite dimensional indecomposable $kG$-module with vertex $P$ and inertia group $I$, Let $Y$ be the indecomposable $kI$-module such that $M$ is a summand of $Y \! \uparrow ^G_I$. Then $\Hom^{\oplus}_{kG}(M,-)$ is isomorphic to $\Hom^{\oplus}_{kI}(Y,-\downarrow ^G_I)$ as a functor on $kG$-modules.
\end{proposition}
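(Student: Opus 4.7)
The plan is to use Frobenius reciprocity to produce a natural isomorphism of ordinary $\Hom$ groups and then show it descends to the $\Hom^{\oplus}$-quotients by identifying the subspaces $J_G$ and $J_I$ of non-split-injective maps on each side.

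By classical Clifford theory $M=Y\!\uparrow^G_I$, so Frobenius reciprocity supplies a natural isomorphism
\[
\Phi_X\colon\Hom_{kG}(M,X)\xrightarrow{\,\sim\,}\Hom_{kI}(Y,X\!\downarrow^G_I),\qquad \tilde f\mapsto \tilde f\circ\iota,
\]
where $\iota\colon Y\hookrightarrow M\!\downarrow^G_I$ sends $y$ to $1\otimes y$. The key structural input is that the Mackey decomposition of $M\!\downarrow^G_I$, combined with the defining property of $I$ as the stabilizer in $N_G(P)$ of the source $U$, yields a $kI$-decomposition $M\!\downarrow^G_I=Y\oplus Z_M$ in which $Z_M$ has no summand isomorphic to $Y$ (for a double coset $IgI$ with $g\notin I$ the corresponding Mackey summand has source ${}^{g}U\not\cong U$). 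Write $\pi\colon M\!\downarrow^G_I\to Y$ for the projection.

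One direction is immediate: if $\tilde f$ admits a $kG$-splitting $\tilde s$, then $t:=\pi\circ\tilde s|_{kI}$ is a $kI$-splitting of $f:=\Phi_X(\tilde f)$, since $tf=\pi(\tilde s\tilde f)|_{kI}\iota=\pi\iota=\Id_Y$. For the reverse direction, given a $kI$-splitting $t$ of $f$, I use the coincidence of induction and coinduction for finite groups to form $\tilde t\colon X\to M$ by $\tilde t(x)=\sum_{g\in G/I}g\otimes t(g^{-1}x)$. Only the $g=1$ summand of $\tilde t(x)$ survives $\pi$, whence $\pi\circ\tilde t|_{kI}=t$, so that $\pi(\tilde t\tilde f)\iota=\pi\tilde t(\tilde f\iota)=tf=\Id_Y$. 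A short Krull--Schmidt argument (using that $Z_M$ has no $Y$-summand) shows that the map $\phi\mapsto\pi\phi\iota$ descends to a ring isomorphism $\overline{E}_M\cong\overline{E}_Y$, so the equation $(\tilde t\tilde f)_{YY}=\Id_Y$ forces $\tilde t\tilde f$ to lie outside $\rad E_M$ and thus to be a unit in the local ring $E_M$. Then $(\tilde t\tilde f)^{-1}\tilde t$ is a $kG$-splitting of $\tilde f$, completing the identification $\Phi_X(J_G(M,X))=J_I(Y,X\!\downarrow^G_I)$.

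The hard part is the Clifford-theoretic structural input: producing the decomposition $M\!\downarrow^G_I=Y\oplus Z_M$ with $Z_M$ free of $Y$-summands, and the induced ring isomorphism $\overline{E}_M\cong\overline{E}_Y$. Once that is in hand the rest of the argument reduces to a formal manipulation with adjunctions together with the locality of $E_M$.
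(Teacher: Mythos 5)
Your opening claim---``By classical Clifford theory $M=Y\!\uparrow^G_I$''---is false, and this breaks the whole argument. The inertia group $I$ here is the stabiliser in $N_G(P)$ of the source, not the Clifford-theoretic inertia group of a simple module over a normal subgroup, and the Green-correspondence version of the theory only gives that $M$ occurs (with multiplicity one) as a direct summand of $Y\!\uparrow^G_I$. Indeed $Y\!\uparrow^G_I=(Y\!\uparrow^{N_G(P)}_I)\!\uparrow^G_{N_G(P)}=Z\!\uparrow^G_{N_G(P)}$ where $Z$ is the Green correspondent of $M$, and this induced module typically contains extra summands with strictly smaller vertex. Consequently the Frobenius reciprocity isomorphism you write down identifies $\Hom_{kI}(Y,X\!\downarrow^G_I)$ with $\Hom_{kG}(Y\!\uparrow^G_I,X)$, of which $\Hom_{kG}(M,X)$ is only a proper summand in general; you would still need to analyse what the extra summand contributes after passing to the $\Hom^{\oplus}$-quotient, which is exactly the nontrivial part of the statement. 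The same error resurfaces concretely in the ``reverse direction'': the formula $\tilde t(x)=\sum_{g\in G/I}g\otimes t(g^{-1}x)$ produces a map $X\to Y\!\uparrow^G_I$, not a map $X\to M$, so it is not the required $kG$-splitting of $\tilde f$.

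For comparison, the paper avoids this trap by passing through $N_G(P)$ rather than jumping directly from $G$ to $I$. It first uses the functorial Green correspondence to get a natural isomorphism at the level of $\Hom^{\mathcal X}$ (for the appropriate class $\mathcal X$ of intersections $P\cap{}^gP$), which is designed precisely to absorb the extra summands of $Z\!\uparrow^G_{N_G(P)}$ that your Frobenius reciprocity fails to control; it then checks the isomorphism of $\Hom^{\oplus}$-functors on indecomposable evaluation modules (using locality of endomorphism rings), and finally descends from $N_G(P)$ to $I$ using the genuine identity $Z\cong Y\!\uparrow^{N_G(P)}_I$, which does hold inside $N_G(P)$. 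If you wish to rescue a more direct argument, you would at minimum need to replace the claimed $\Hom$-isomorphism by the correct splitting $\Hom_{kI}(Y,X\!\downarrow^G_I)\cong\Hom_{kG}(M,X)\oplus\Hom_{kG}(M',X)$ with $M'$ the complement, and then prove that the image of the second summand lands in $J_I(Y,X\!\downarrow^G_I)$; that is essentially what the Green-correspondence machinery accomplishes.
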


\begin{proof}
Let $Z$ be the $kN_G(P)$-module that is the Green correspondent of $M$. We will show that both functors are isomorphic to $\Hom^{\oplus}_{kN_G(P)}(Z, -\downarrow ^G_{N_G(P)})$. It is sufficient to check that a natural transformation induces an isomorphism on finite dimensional indecomposable modules, because the functors commute with direct sums, so this gives us the case of all finite dimensional modules, and the functors also commute with filtered colimits, which gives us all modules.

Let $\mathcal X$ be the set of subgroups of $P$ of the form $P \cap \! {}^g \! P$, for $g \in G \smallsetminus N_G(P)$. As an immediate consequence of the functorial version of the Green correspondence \cite[4.1]{Green}, there is
a natural isomorphism $\Hom^{\mathcal X}_{kG}(M,-) \rightarrow \Hom^{\mathcal X}_{kN_G(P)}(Z, -\downarrow ^G_{N_G(P)})$ given by first restricting the group from $G$ to $N_G(P)$ and then restricting the first module
along the inclusion of $Z$ in $M \downarrow ^G_{N_G(P)}$ as a summand. Evaluated at $M$, it yields a ring isomorphism $\End^{\mathcal X}_{kG}(M) \rightarrow \End^{\mathcal X}_{kN_G(P)}(Z)$ and hence $\End^{\oplus}_{kG}(M) \cong \End^{\mathcal X}_{kG}(M)/\rad \End^{\mathcal X}_{kG}(M) \cong \End^{\mathcal X}_{kN_G(P)}(Z)/\rad \End^{\mathcal X}_{kN_G(P)}(Z) \cong \End^{\oplus}_{kN_G(P)}(Z)$. 

Evaluated at any other finite dimensional indecomposable module $N$, clearly $\Hom_{kG}^{\oplus}(M,N)=0$. If $\Hom^{\oplus}_{kN_G(P)}(Z,N \! \downarrow ^G_{N_G(P)}) \ne 0$ then $Z$ is a summand of $N \! \downarrow ^G_{N_G(P)}$, so by the functorial version of the Green correspondence mentioned above, $M$ is a summand of $N$ if we work in the quotient category with morphisms $\Hom_{kG}^{\mathcal X}$. But $\End_{kG}(M)$ is local and $\rad \End_{kG}^{\mathcal X} (M)$ is in its radical, so $M$ is a summand of $N$, a contradiction.

We know that $Z \cong Y \! \uparrow^{N_G(P)}_I$ and $Z \downarrow^{N_G(P)}_I$ has a summand that we identify with $Y$ via inclusion and projection maps $\iota$ and $\rho$. In fact $Z \! \downarrow^{N_G(P)}_I = \oplus_{g \in N_G(P)/I}gY$, where $gY \cong {}^g \! Y$.

The assignment $f \mapsto \rho f \iota$ yields a ring homomorphism $\End_{kN_G(P)}(Z) \rightarrow \End^{\oplus}_{kI}(Y)$. This is because the difference between $\rho f_1 \iota \rho f_2 \iota$ and $\rho f_1 f_2 \iota$ is a sum of morphisms that factor through some ${}^g \! Y$ for $g \not \in N_G(P)$, so are in $K_I(Y,Y)$. This homomorphism is surjective, because any $\overline{f} \in \End^{\oplus}_{kI}(Y)$ lifts to $f \in \End_{kI}(Y)$ and then to $\oplus_{g \in N_G(P)/I} {}^g \! f \in \End_{kN_G(P)}(Z)$. As before, this yields an isomorphism of division algebras $\End^{\oplus}_{kN_G(P)}(Z) \rightarrow \End^{\oplus}_{kI}(Y)$.

Now let $X$ be an indecomposable $kN_G(P)$-module that is not isomorphic to $Z$. Clearly $\Hom^{\oplus}_{kN_G(P)}(Z,X)=0$. Assume that $\Hom^{\oplus}_{kI}(Y,X \! \downarrow^{N_G(P)}_{I}) \ne 0$. Then $X \! \downarrow^{N_G(P)}_{I}$  has a summand $Y'$ that is isomorphic to $Y$ and any $gY' \cong {}^g \! Y'$ for $g \in N_G(P)$ is also a summand. The ${}^g \! Y'$ for $g$ in different cosets modulo $I$ are pairwise non-isomorphic, so $\tilde{Y} = \sum _{g \in N_G(P)/I} gY'$ is actually a direct sum and a $kI$-summand of $X$ (this is a formal consequence of the fact that the indecomposables have local endomorphism rings, cf. the lemma after \cite[19.19]{CR}). It is naturally a $kN_G(P)$-submodule of $X$ and isomorphic to $Z$. It is also projective relative to $I$, so must be a summand of $X$, contradicting the assumption. 

This shows that the two functors are isomorphic on indecomposable modules, as required.
\end{proof}

\begin{proposition}
\label{p:proj}

Let $E$ be an indecomposable projective $kG$-module whose socle $T$ has vertex $P$. Then $\Hom^{\oplus}_{kG}(E,k[V])$ is $0$ if $\ker(V) \not \leq \ker (E)$ and otherwise has dimension equal to $\dim_k V$ and depth at least $\min \{ \dim_k V, \dim_k V^P+2, \dim \widehat{\Hom}_{kG}(T,k[V]) +1 \} \geq \min \{ \dim_k V, \dim_k V^P +1 \}$.

In particular, if $G$ is a $p$-group $P$ then $T=k$ and the depth is at least $\min \{ \dim_k V, \dim_k V^P+2, \max_{C \leq P, |C|=p} \dim_k V^C +1 \}$.
\end{proposition}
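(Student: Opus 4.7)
The strategy is to rewrite $\Hom^{\oplus}_{kG}(E,k[V])$ as the kernel of a natural map between two groups already controlled by Section~\ref{s:depth}: the module of covariants $\Hom_{kG}(T,k[V])$ for the socle $T$ and the Tate cohomology $\widehat{\Hom}_{kG}(T,k[V])$. If there exists $g\in\ker(V)\smallsetminus\ker(E)$, then $g$ fixes $k[V]$ pointwise but acts non-trivially on $E$, which rules out any summand of $k[V]$ isomorphic to $E$ and forces $\Hom^{\oplus}_{kG}(E,k[V])=0$; so I assume $\ker(V)\leq\ker(E)$ from here on.

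The central step is to produce the short exact sequence of $k[V]^G$-modules
\[
0 \to \Hom^{\oplus}_{kG}(E,k[V]) \to \Hom_{kG}(T,k[V]) \to \widehat{\Hom}_{kG}(T,k[V]) \to 0
\]
by applying $\Hom_{kG}(-,k[V])$ to $0\to T\to E\to E/T\to 0$. Using $\Ext^1_{kG}(E,-)=0$ yields the four-term sequence $0\to\Hom(E/T,k[V])\to\Hom(E,k[V])\to\Hom(T,k[V])\to\Ext^1(E/T,k[V])\to 0$, and two identifications close the argument. First, $J_G(E,k[V])=\Hom_{kG}(E/T,k[V])$: since $E$ is self-injective with simple socle $T$, every non-zero submodule of $E$ contains $T$, so $f\colon E\to k[V]$ is injective iff $f|_T\ne 0$, and an injection out of an injective module is automatically split. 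Hence $\Hom^{\oplus}_{kG}(E,k[V])$ is precisely the image of restriction to $T$. Second, the given sequence realises $T$ as $\Omega(E/T)$ (with no projective summands absorbed because $T$ is simple and non-projective when $P\ne 1$; when $P=1$ one has $E=T$ and the displayed sequence degenerates to the identity), so $\Ext^1_{kG}(E/T,k[V])\cong\widehat{\Hom}_{kG}(T,k[V])$ by dimension shifting in Tate cohomology.

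From this sequence I read off the dimension and depth by applying Theorems~\ref{th:sub} and~\ref{th:ext} to $T$, whose vertex is $P$ and whose covariants $T_{\ker V}=T$ are non-zero (since $\ker V\leq\ker E$ acts trivially on $T\subseteq E$). Theorem~\ref{th:sub} gives $\dim\Hom_{kG}(T,k[V])=\dim_k V$; projectivity of $E$ forbids any non-trivial $p$-subgroup $C$ from lying in $\ker(E)$ (else $\Res^G_C E$ would be trivial and projective), so $\dim_k V^C<\dim_k V$ for every order-$p$ subgroup $C\leq G$, and Theorem~\ref{th:ext} gives $\dim\widehat{\Hom}_{kG}(T,k[V])<\dim_k V$. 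The short exact sequence then forces $\dim\Hom^{\oplus}_{kG}(E,k[V])=\dim_k V$. For the depth bound the long exact sequence of local cohomology (the depth lemma) yields $\depth\Hom^{\oplus}_{kG}(E,k[V])\geq\min\{\depth\Hom_{kG}(T,k[V]),\,\depth\widehat{\Hom}_{kG}(T,k[V])+1\}$; inserting $\depth\Hom_{kG}(T,k[V])\geq\min\{\dim_k V,\dim_k V^P+2\}$ from Theorem~\ref{th:sub} and $\depth\widehat{\Hom}_{kG}(T,k[V])\geq\dim_k V^P$ from Theorem~\ref{th:ext} produces the stated inequality, and the chain $\dim\widehat{\Hom}\geq\depth\widehat{\Hom}\geq\dim_k V^P$ delivers the consequence $\min\{\dim_k V,\dim_k V^P+1\}$. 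In the $p$-group case the example after Theorem~\ref{t:inc} identifies $\dim\widehat{\Hom}_{kG}(k,k[V])$ with $\max_{|C|=p}\dim_k V^C$, yielding the final clause. The main obstacle is the Heller identification $\Omega(E/T)=T$, which works cleanly precisely because $T$ is simple and not projective; the degenerate case $P=1$ must be handled separately, but the conclusions then reduce immediately to Theorem~\ref{th:sub}.
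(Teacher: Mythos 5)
Your proof is correct, and the central device is the same as the paper's: a short exact sequence of $R$-modules
\[
0 \to \Hom^{\oplus}_{kG}(E,k[V]) \to \Hom_{kG}(T,k[V]) \to \widehat{\Hom}_{kG}(T,k[V]) \to 0,
\]
combined with the depth lemma and Theorems~\ref{th:sub} and~\ref{th:ext}. You reach this sequence by applying $\Hom_{kG}(-,k[V])$ to $0 \to T \to E \to E/T \to 0$ and then identifying $J_G(E,-) = \Hom_{kG}(E/T,-)$ and $\Ext^1_{kG}(E/T,-) \cong \widehat{\Hom}_{kG}(T,-)$ via $\Omega(E/T)=T$; the paper instead analyses the restriction map $\Hom_{kG}(E,-)\to\Hom_{kG}(T,-)$ directly, using the same key observation (a map out of the injective $E$ is split injective iff it is nonzero on the simple socle $T$). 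These are two packagings of the same idea, and your route has the small advantage of making the degenerate case $P=1$ (where $E=T$ is simple projective) visibly separate. Where you genuinely diverge is the dimension: you read $\dim\Hom^{\oplus}_{kG}(E,k[V])=\dim_k V$ off the short exact sequence by observing that $\dim\widehat{\Hom}_{kG}(T,k[V])<\dim_k V$ (since projectivity of $E$ forbids any order-$p$ subgroup from lying in $\ker E\supseteq\ker V$), whereas the paper revisits the proof of Theorem~\ref{th:sub}, noting that $E$ is a summand of the free $k(G/\ker V)$-module $F\subseteq k[V]$ and so multiplication by the resulting split embedding stays nonzero in $\Hom^{\oplus}$. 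Your version is a bit more self-contained. One small point on the statement itself: the depth lemma delivers the bound with $\depth\widehat{\Hom}_{kG}(T,k[V])+1$, not $\dim\widehat{\Hom}_{kG}(T,k[V])+1$; the paper's own proof likewise works with $\depth$, so the $\dim$ in the proposition's display (and correspondingly the $\max_{C}\dim_k V^C$ in the $p$-group clause) appears to be a typo for $\depth$, and you should not present what you derived as literally matching the displayed bound without flagging this.
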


\begin{proof}
By the condition on the kernels, we can work over $G/\ker(V)$ so assume $V$ is faithful. The dimension is then found as in the proof of Theorem~\ref{th:sub}, because the $E$ here is isomorphic to a summand of the $F$ there.

Any non-zero homomorphism $T \rightarrow X$ is injective, since $T$ is simple; if it factors through a projective module then it also factors through through an injective module, since projective $kG$-modules are also injective. Thus it factors through the injective hull of $T$, which is $E$. The map $E \rightarrow X$ is injective, since it is injective on its socle, and its image is a summand, since $E$ is an injective module. It follows that we have a short exact sequence
\[ 0 \rightarrow \Hom^{\oplus}_{kG}(E, -) \rightarrow \Hom_{kG}(T,-) \rightarrow \widehat{\Hom}_{kG}(T,-) \rightarrow 0.
\]

 By a standard property of depth on short exact sequences \cite[18.6]{Eisenbud}
 \cite[1.2.9]{BH}
\[
\depth \Hom^{\oplus}_{kG}(E,k[V]) \geq \min \{ \depth \Hom_{kG}(T,k[V]), \depth \widehat{\Hom}_{kG}(T,k[V])+1\}
\]
 (with the usual provisos when a module is $0$).

But $\depth \Hom_{kG}(T,k[V]) \geq \min \{ \dim_k V, \dim_k V^P+2 \}$ by Theorem~\ref{th:sub} and $\depth \widehat{\Hom}_{kG}(T,k[V]) \geq \dim_k V^P$, by Theorem~\ref{th:ext}.

For the $p$-group case, use Corollary~\ref{cor:bound}.
\end{proof}

\begin{proposition}
\label{p:red}
Let $M$ be a finite dimensional indecomposable $kG$-module that is not projective. Then there is an additive functor $F$ from $kG_p$-modules to $k$-modules such that $\Hom^{\oplus}_{kG}(M,-)$ is a summand of $F(-\downarrow^G_{G_p})$ as functors.
\end{proposition}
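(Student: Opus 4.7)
The plan is to exploit Higman's criterion, which applies because the index $[G:G_p]$ is coprime to $p$ and hence invertible in $k$. Consequently every $kG$-module $X$ is a canonical direct summand of $X\downarrow^G_{G_p}\uparrow^G_{G_p}$: the counit $\epsilon_X\colon X\downarrow^G_{G_p}\uparrow^G_{G_p} \to X$ of the induction--restriction adjunction admits the natural right inverse
\[
s_X(x) \;=\; [G:G_p]^{-1}\sum_{g \in G/G_p} g\otimes g^{-1}x,
\]
so we obtain a natural decomposition $X\downarrow^G_{G_p}\uparrow^G_{G_p} \cong X \oplus K_X$ of $kG$-modules.

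Given this, I would define the functor
\[
F\colon kG_p\text{-Mod}\longrightarrow k\text{-Mod},\qquad F(Z):=\Hom^{\oplus}_{kG}(M,\,Z\uparrow^G_{G_p}).
\]
This is additive and in fact commutes with arbitrary direct sums, since both $\uparrow^G_{G_p}$ and $\Hom^{\oplus}_{kG}(M,-)$ do (the latter by Lemma~\ref{l:sum}). Evaluating at $Z=X\downarrow^G_{G_p}$ gives $F(X\downarrow^G_{G_p})=\Hom^{\oplus}_{kG}(M,X\downarrow^G_{G_p}\uparrow^G_{G_p})$, and applying the additive functor $\Hom^{\oplus}_{kG}(M,-)$ to the natural splitting above exhibits $\Hom^{\oplus}_{kG}(M,X)$ as a natural direct summand of $F(X\downarrow^G_{G_p})$, which is exactly the required assertion.

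There is essentially no obstacle here: once one has the Higman splitting and the additivity of $\Hom^{\oplus}_{kG}(M,-)$ the construction is automatic. I would remark that the non-projectivity hypothesis on $M$ is not actually invoked by this argument; its role is external, since the projective case has already been treated (with a sharper estimate) by Proposition~\ref{p:proj}. In the intended application within the proof of Theorem~\ref{th:sum}, Proposition~\ref{p:red} will be combined with Proposition~\ref{p:inertia}: one first uses the latter to replace $(G,M)$ by $(I,Y)$, and then applies the above construction inside $I$, which is how the Sylow $p$-subgroup $G_p$ in the statement gets upgraded to $I_p$ in the depth bound.
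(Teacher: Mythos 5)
Your proof is correct, but it takes a genuinely different route from the paper's. The paper (since $M$ is not projective) takes the almost split sequence $0\to M\to L\to K\to 0$ and, via \cite[4.12.6]{Benson-rep}, realises $\Hom^{\oplus}_{kG}(M,-)$ as the cokernel of the natural transformation $\Hom_{kG}(L,-)\to\Hom_{kG}(M,-)$; it then sets $F=\Coker\bigl(\Hom_{kG_p}(L,-)\to\Hom_{kG_p}(M,-)\bigr)$ and uses the restriction/transfer idempotent $r,t$ with $t\circ r=\Id$ to exhibit $\Hom^{\oplus}_{kG}(M,-)$ as a summand of $F(-\downarrow^G_{G_p})$. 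You instead take $F(Z)=\Hom^{\oplus}_{kG}(M,Z\!\uparrow^G_{G_p})$ and use the natural Higman splitting $X\mid X\!\downarrow^G_{G_p}\!\uparrow^G_{G_p}$ together with the additivity of $\Hom^{\oplus}_{kG}(M,-)$; this is legitimate because the section $s_X$ you write down is natural in $X$, and post-composition does preserve $J_G(M,-)$, so $\Hom^{\oplus}_{kG}(M,-)$ really is a functor to which the split idempotent can be applied. Your route avoids Auslander--Reiten theory entirely, your $F$ visibly commutes with direct sums (as required by Proposition~\ref{p:tensoroff} in the application to Lemma~\ref{l:plusdepth}), and, as you observe, the non-projectivity hypothesis is not used and could be dropped from the statement. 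The only thing the paper's construction offers in exchange is an $F$ built from ordinary $\Hom$-functors over $kG_p$ rather than one that still carries a copy of $\Hom^{\oplus}$; for the depth estimate this makes no difference, so your argument is, if anything, the cleaner of the two.
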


\begin{proof}
Let \[
0 \rightarrow M \rightarrow L \rightarrow K \rightarrow 0
\]
be the almost split sequence beginning with $M$. According to  \cite[4.12.6]{Benson-rep}, there is an exact sequence of functors 
\[
0 \rightarrow \Hom_{kG}(K,-) \rightarrow \Hom_{kG}(L,-) \rightarrow \Hom_{kG}(M,-) 
\rightarrow \Hom^{\oplus}_{kG}(M,-)  \rightarrow 0.
\]
A priori this is just as functors on finite dimensional modules, but all these functors commute with filtered colimits by Lemma~\ref{l:colim} and filtered colimits are exact in this context. We obtain a diagram

\begin{adjustbox}{width=\textwidth,center}
$
\xymatrix{
0  \ar[r]  & \Hom_{kG}(K,-) \ar@/_/[d]_r \ar[r]  & \Hom_{kG}(L,-) \ar@/_/[d]_r \ar[r] & \Hom_{kG}(M,-) 
\ar@/_/[d]_r \ar[r] & \Hom^{\oplus}_{kG}(M,-)  \ar[r] & 0 \\
0  \ar[r]  &\Hom_{kG_p}(K,-\downarrow^G_{G_p}) \ar@/_/[u]_t \ar[r] &\Hom_{kG_p}(L,-\downarrow^G_{G_p}) \ar@/_/[u]_t \ar[r]^f &\Hom_{kG_p}(M,-\downarrow^G_{G_p}) \ar@/_/[u]_t &&
,
}
$
\end{adjustbox}
where $r$ stands for restriction and $t$ for $|G:G_p|^{-1}\tr^G_{G_p}$. The squares commute and $ t \circ r=\Id$. It follows that $\Hom^{\oplus}_{kG}(M,-)$ embeds as a functor as a summand of $\Coker(f)$.
\end{proof}

\begin{lemma}
\label{l:plusdepth}
For any indecomposable $kG$-module $M$, $\Hom^{\oplus}_{kG}(M,k[V])$ is $0$ or has depth at least $\dim_k V^{G_p}$. 
\end{lemma}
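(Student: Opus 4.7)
The plan is to split on whether $M$ is projective, since the two preceding structural reductions (Propositions~\ref{p:proj} and~\ref{p:red}) apply in different ways.

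If $M$ is projective, I would apply Proposition~\ref{p:proj} to $E = M$ directly. Provided $\Hom^{\oplus}_{kG}(M,k[V]) \ne 0$, that proposition yields a depth bound of at least $\min\{\dim_k V,\, \dim_k V^P + 1\}$, where $P$ is the vertex of the socle of $M$. Since $P$ is a $p$-subgroup of $G$, after a harmless conjugation I may take $P \leq G_p$; then $V^{G_p} \subseteq V^P$, so both $\dim_k V$ and $\dim_k V^P + 1$ dominate $\dim_k V^{G_p}$, giving the claim.

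If $M$ is not projective, Proposition~\ref{p:red} supplies an additive functor $F$ from $kG_p$-modules to $k$-modules with $\Hom^{\oplus}_{kG}(M,-)$ appearing as a summand of $F(-\downarrow^G_{G_p})$ in the category of functors on $kG$-modules. From its construction, $F$ is the cokernel of a natural transformation between $\Hom_{kG_p}(L,-)$ and $\Hom_{kG_p}(M,-)$ (with $L$ and $M$ finite-dimensional), so $F$ is $k$-linear and commutes with direct sums, and therefore meets the hypotheses of Proposition~\ref{p:tensoroff}. Apply that proposition to $F$ with the $p$-group $G_p$ acting on $V$: there are $y_1, \ldots, y_m \in k[V]^{G_p}$ (with $m = \dim_k V^{G_p}$) and an auxiliary module $U$ such that
\[
F(k[V]\downarrow^G_{G_p}) \;\cong\; k[y_1, \ldots, y_m] \otimes_k F(U)
\]
as $R$-modules. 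If this is zero then the summand $\Hom^{\oplus}_{kG}(M,k[V])$ is also zero; otherwise Proposition~\ref{p:tensoroff} gives $\depth F(k[V]\downarrow^G_{G_p}) = m + \depth F(U) \geq m = \dim_k V^{G_p}$, and a summand has depth at least that of the ambient module, so the same bound passes to $\Hom^{\oplus}_{kG}(M,k[V])$.

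The main obstacle I anticipate is the book-keeping required to turn the purely functorial summand from Proposition~\ref{p:red} into an honest $R$-module summand after evaluation at $k[V]$. The key observation is that for any $r \in R \subseteq k[V]^G$, multiplication by $r$ is a $kG$-module, and hence $kG_p$-module, endomorphism of $k[V]$, so the natural transformations comprising the splitting commute with $m_r$ and are therefore $R$-linear; an analogous check shows that the isomorphism delivered by Proposition~\ref{p:tensoroff} is $R$-linear. Once these compatibilities are in place, the desired bound reads off directly from the polynomial factor of size $\dim_k V^{G_p}$ in the tensor decomposition of $k[V]$ over $G_p$.
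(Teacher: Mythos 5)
Your proposal is correct, and for the non-projective case it follows exactly the route the paper intends: Proposition~\ref{p:red} supplies a functor $F$ on $kG_p$-modules with $\Hom^{\oplus}_{kG}(M,-)$ a summand of $F(-\downarrow^G_{G_p})$, Proposition~\ref{p:tensoroff} applied to $F$ over the $p$-group $G_p$ gives $F(k[V])\cong k[y_1,\ldots,y_m]\otimes_k F(U)$ with $m=\dim_k V^{G_p}$, and the summand inherits the depth bound. Your remark on $R$-linearity of the splitting is the right concern and it is indeed automatic because multiplication by $r\in R\subseteq k[V]^G$ is a $kG$- (hence $kG_p$-) endomorphism of $k[V]$, exactly as discussed in Section~\ref{s:functor}.

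Where you go beyond the paper is the projective case. The paper's proof of the lemma is the single sentence ``Combine Proposition~\ref{p:tensoroff} and Proposition~\ref{p:red},'' but Proposition~\ref{p:red} has the hypothesis that $M$ is not projective, so that one-liner does not literally cover projective $M$; the paper only addresses projectives later, when deriving Theorem~\ref{th:sum}, by invoking Proposition~\ref{p:proj} directly. Your explicit reduction of the projective case to Proposition~\ref{p:proj} --- noting that with $P$ the vertex of the socle one has $\dim_k V^P\ge\dim_k V^{G_p}$, so the bound $\min\{\dim_k V,\dim_k V^P+1\}$ dominates $\dim_k V^{G_p}$ --- fills that gap cleanly and makes the lemma's ``for any indecomposable'' claim fully justified as stated.
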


\begin{proof}
Combine Proposition~\ref{p:tensoroff} and Proposition~\ref{p:red}.
\end{proof}

Theorem~\ref{th:sum} can now be proved for non-projective modules by using Proposition~\ref{p:inertia} to reduce to $I$ and then Lemma~\ref{l:plusdepth} to reduce to $I_p$.
For projective modules it is a weak form of Proposition~\ref{p:proj}.

\section{Dickson Invariants}
\label{s:Dickson}

In Theorem~\ref{t:equiv}, if we find a sub-algebra $Q$ of $R$ such that $S/T$ is still finite over $Q$, then the other rings are also finite over $Q$ and so are any finitely generated modules for these rings, such as occur in Corollary~\ref{cor:bound}.

When $k=\mathbb F_q$, the field of $q$ elements, $q$ a power of $p$,  $V$ an $\mathbb F_q$ vector space of dimension $n$ and $G \subseteq \Gl_{\mathbb F_q}(V)$, we have $\mathbb F_q [c_{n,0}, \ldots , c_{n,n-1}] \subseteq \mathbb F_q[V]^G$, where the $c_{n,i}$ are the Dickson invariants. For any $\mathbb F_q$-subspace $W \leq V$ of dimension $m$, the subset of elements $c_{n,i}$ with $i \geq n-m$ forms a system of parameters on $W$ \cite[8.1.2]{Benson-inv}.

In Proposition~\ref{p:tensoroff}, we can use $c_{n,n-m}, \ldots , c_{n,n-1}$ as $y_1, \ldots , y_m$.
In particular, in the context of Theorem~\ref{th:[]}, $\Hom_{[kG]}(M,\mathbb F_q[V])$ is finite and free over $\mathbb F_q[c_{n,n-m}, \ldots , c_{n,n-1}]$, for $n=\dim_k V$ and $m=\dim_k V^P$. Similarly, in the context of Theorem~\ref{th:sum}, $\Hom^{\oplus}_{kG}(M,\mathbb F_q[V])$ is finite over $\mathbb F_q[c_{n,n-m}, \ldots , c_{n,n-1}]$ and free (but maybe not finite) over $\mathbb F_q[c_{n,n- \ell}, \ldots , c_{n,n-1}]$, where $\ell = \dim_k V^{I_p}$.

\section{The Non-Graded Case}
\label{s:ng}

Most of our results for general $S$ also apply in the non-graded case with the same proofs, either by ignoring the grading or by putting eveything in degree 0. Of course, regularity no longer makes sense and where depth is mentioned we must assume that $R$ is local or replace depth with the grade with respect to an ideal.

The results for polynomial rings $k[V]$ also hold if we take the graded case and just forget the grading. We replace depth by grade with respect to what used to be the graded maximal ideal. For more general group actions on a polynomial ring our methods do not apply,because they depend on Proposition~\ref{p:tensoroff}. 

\section{Examples}
\label{s:examples}

Let $p=2$, $C$ a group of order 2 and $V$ a free $kC$-module of rank 1, where $C$ swaps the basis elements $x$ and $y$ and their duals $x^*$ and $y^*$. Then $k[V]^C=k[a,b]$, where $a=x^*+y^*$ and $b=x^*y^*$.

We can identify $\Hom^\oplus_{kC}(V,S)$ with the image of $\tr^C_1$ on $S$ (there are only two indecomposable modules and if we substitute either one for $S$ the claim is true). 

Consider the sum of two copies of $V$, $V_1 \oplus V_2$. We know that $k[V_1 \oplus V_2]$ is finite over $k[a_1,a_2,b_1,b_2]$. Clearly $\tr^C_1k[V_1 \oplus V_2]$ has two $k$-basis elements in degree 1, $a_1$ and $a_2$, and $a_1a_2=a_2a_1$ (where in each product the first $a_i$ is considered to be a parameter and the second a basis element), so $\tr^C_1k[V_1 \oplus V_2]$ cannot be Cohen-Macaulay. It must have dimension 4 and depth 3, by Proposition~\ref{p:proj}.

Now let $p=3$, D a cyclic group of order 3 and $V$ an indecomposable $kD$-module of dimension 2.
Then $k[V]^D=k[c,d]$, where $\deg c=1$ and $\deg d=3$. The structure as a $kD$-module is known (cf.\ \cite[1.1]{KS}): $S=k[V] \cong k[d] \otimes ( S_0 \oplus S_1 \oplus (k[c] \otimes S_2))$. This leads to a Hilbert Series with coefficients in the Green ring (in fairly obvious notation, but cf.\ \cite{HS}) 
\[
\mathcal H (S,t)= \frac{1}{1-t^3}\left( [k] + t [V] + \frac{t^2}{1-t}[F] \right),
\]
where $F$ denotes the free module of rank 1.

Now consider the sum of three copies of $V$. A straightforward calculation shows that 
\begin{multline*}
\mathcal H(k[V_1 \oplus V_2 \oplus V_3],t) = \mathcal H(S^{\otimes 3},t) = \mathcal H(S,t)^3 = \\ = \frac{1}{(1-t^3)^3}\left( (1+3t^2)[k] + (3t+t^3)[V] + \frac{6t^2-t^3+3t^4+t^6}{(1-t)^3}[F] \right).
\end{multline*}
We know that the $[k]$ and $[V]$ parts are Cohen-Macaulay of dimension 3, by Proposition~\ref{p:tensoroff} or Corollary~\ref{c:sum}. But the $[F]$ part cannot be Cohen-Macaulay, because the terms in the denominator of the Hilbert series correspond to the degrees of the obvious set of parameters but there is a minus sign in the numerator. It must have dimension 6 and depth 4 or 5, by Proposition~\ref{p:proj}. The $[k]$ part corresponds to $k[V_1 \oplus V_2 \oplus V_3]^{\oplus D}$; this is not Gorenstein, again by considering the numerator in the Hilbert series, in contrast to the result in \cite{Stanley} that in characteristic 0 the invariant subalgebra is Gorenstein when the determinant of the representation $V$ is trivial.

In fact $k[V_1 \oplus V_2 \oplus V_3]^{\oplus D}$ must be free over $k[d_1,d_2,d_3]$, with basis 1 and three elements in degree 2 (which can be taken to be the images of elements of the form $x_iy_j-x_jy_i$). The product of any two of these images must be $0$, because there is nothing in degree 4, so we have completely determined the algebra $k[V_1 \oplus V_2 \oplus V_3]^{\oplus D}$. 

The detailed calculations for cyclic groups in \cite{Elmer} can be used to construct many similar examples.
Larger examples are calculated in \cite{KS, KS1}. Although they are misleadingly well behaved, they were the motivation for this work.

\end{document}